\newcommand{\ep}[1]{\ensuremath{\varepsilon^{#1}}}
\newcommand{\mover}[2]{\ensuremath{\left(\begin{array}{c}{#1}\\{#2}
\end{array}\right)}}
\title{An investigation of global radial basis function collocation methods applied to Helmholtz problems\thanks{The first author was supported by a grant from The Swedish 
Research Council. The second author was funded by the graduate school in 
Mathematics and Scientific Computing.}}
\author{Elisabeth Larsson\footnotemark[7] \and Ulrika Sundin\footnotemark[7]}
\begin{document}

\pagestyle{myheadings}
\thispagestyle{plain}
\markboth{ELISABETH LARSSON AND ULRIKA SUNDIN}
         {RBF APPROXIMATIONS OF THE HELMHOLTZ EQUATION}

\maketitle

\renewcommand{\thefootnote}{\fnsymbol{footnote}}
 
\footnotetext[7]{Scientific Computing, Department of Information Technology, 
Uppsala University,
Box~337, SE-751~05 Uppsala, Sweden ({\tt elisabeth.larsson@it.uu.se}, 
{\tt ulrika.sundin@it.uu.se}).}

\begin{abstract}
  Global radial basis function (RBF) collocation methods with inifinitely smooth basis functions for partial differential equations (PDEs) work in general geometries, and can have exponential convergence properties for smooth solution functions. At the same time, the linear systems that arise are dense and severely ill-conditioned for large numbers of unknowns and small values of the shape parameter that determines how flat the basis functions are. We use Helmholtz equation as an application problem for the theoretical analysis and numerical experiments. We analyse and characterise the convergence properties as a function of the number of unknowns and for different shape parameter ranges. We provide theoretical results for the flat limit of the PDE solutions and investigate when the non-symmetric collocation matrices become singular. We also provide practical strategies for choosing the method parameters and evaluate the results on Helmholtz problems in a curved waveguide geometry.
  %
\end{abstract}
\begin{keywords} 
Radial basis function, Helmholtz equation, shape parameter, flat limit, error estimate
\end{keywords}

\begin{AMS}

65N35, 65D15, 41A30
\end{AMS}

\pagenumbering{arabic}

\section{Introduction}
We started writing this paper in 2004. Some of the results can be found in the MSc thesis of the second author~\cite{Pettersson03}. At that time, the first paper on the flat radial basis function (RBF) interpolation limit~\cite{DriFo02} had just been published, and most of the work on the paper about multivariate flat RBF limits~\cite{LaFo05} was done, but the paper was not published yet. The focus of research in RBF-based methods for partial differential equations (PDEs) was on global collocation methods, and we were interested in the limit behavior for RBF approximations to PDEs. Then the manuscript ended up 'in a drawer' due to various circumstances, and we came to pick it up again 15 years later. The current research focus has shifted to localized RBF-methods such as RBF-generated finite difference methods (RBF-FD)~\cite{FoFly15} and RBF partition of unity methods (RBF-PUM)~\cite{LaShchHe17}. However, we think that the results in this paper, even though they are on global RBF methods, provide insights that are generally useful also today. The objectives of the work are 
\begin{itemize}
\item to investigate the approximation errors theoretically and numerically to gain understanding both about the flat limit, the convergence properties, and the dependence on the shape parameter,
\item to identify the gaps between theoretical results and numerical behavior,
\item to provide practically useful strategies for choosing the method parameters and assessing the results.  
\end{itemize}
The outline of the paper is as follows: In Section~\ref{sec:mod}, we define three different Helmholtz test problems that are used throughout the paper. In Section~\ref{sec:coll} we derive the systems of equations for non-symmetric and symmetric collocation. Section~\ref{sec:sing} is devoted to cases where the non-symmetric collocation matrix is singular, and in Section~\ref{sec:limit}, we discuss the limit properties. How to prove these properties is sketched in Appendix~\ref{sec:A}. Section~\ref{sec:theor} contains a combination of theoretical error estimates, and more heuristic error approximations. Then in Section~\ref{sec:exp}, we provide numerical results as well as practical strategies for method parameter selection. The paper ends with a discussion of the results in Section~\ref{sec:end}.

\section{Generic and specific model problems}\label{sec:mod}
Throughout the paper, we consider time-independent, linear, partial 
differential equations (PDEs). We assume that the PDE equation(s), together with the different boundary equations can be summarized as
\begin{equation}
\mathcal{L}^iu(\underline{x})=f^i(\underline{x}), \quad \underline{x}\in \Omega^i,\quad i=1,\ldots,N_{\mathrm{op}},
\label{eq:general}
\end{equation}   
where $\mathcal{L}^i$ is a linear operator, $u$ is the solution function, 
$f^i$ is a given function, $\underline{x}=(x_1,\ldots,x_d)\in\mathbb{R}^d$, and $\Omega^i\subseteq\bar{\Omega}$ is a region in the computational domain or a boundary segment.

To give examples and illustrate specific properties, we use a series of 
Helmholtz problems of increasing complexity. The Helmholtz equation models 
time-harmonic wave propagation, and in all cases, we consider wave guide problems with a wave originating 
from a source at the left boundary and propagating to the right. We allow 
reflected waves from the interior of the domain to propagate back to the left and 
out through the left boundary, but no waves may enter from outside the right 
boundary. The main reasons for our choice of model problems are the following:
\begin{itemize}
\item There is one problem parameter, the wavenumber $\kappa$, that can be varied
to study its relation to the RBF method parameters.  
\item A Helmholtz problem is generally more difficult to solve than a Laplace or
Poisson problem, especially for large wavenumbers, due to the indefiniteness of
the operator, the wave nature of the solution, and the typically more complicated
boundary conditions.
\end{itemize}
The Helmholtz PDE is in all examples given by 
\begin{equation}
\mathcal{L}^1u(\underline{x}) = -\Delta u(\underline{x}) - \kappa^{2}u(\underline{x}) = 0, \quad \underline{x}\in \Omega^1=\Omega.
\label{eq:HZ}
\end{equation}
%
%
%

The first and simplest model problem is one-dimensional, with $\Omega=(0,1)$. The
non-reflecting (or radiation) boundary conditions are given by
\begin{eqnarray}
\mathcal{L}^2u(x)&=&\displaystyle -\frac{d u}{d x}(x) - i\kappa u(x)  = -2i\kappa,  \quad x=0,\label{eq:1Dbca}\\
\mathcal{L}^3u(x)&=&\displaystyle \phantom{-}\frac{d u}{d x}(x)  - i\kappa u(x)  =  \phantom{-}0,  \phantom{i\kappa}\quad x=1,
\label{eq:1Dbcb}
\end{eqnarray}
and the analytical solution is $u(x)=\exp(i\kappa x)$, if $\kappa$ is constant.

The second problem is two-dimensional with a rectangular domain 
$\Omega=(0,L_1)\times(0,1)$. At the top and bottom boundaries, we use the Dirichlet 
boundary condition 
\begin{equation}
\mathcal{L}^4u(\underline{x})=u(\underline{x})=0,\quad \underline{x}=(0,x_2) \mbox{ or } \underline{x}=(L_1,x_2),
\label{eq:L3}
\end{equation}
indicating that we consider a waveguide type of problem. The conditions at the left 
and right boundaries are
\begin{eqnarray}
\mathcal{L}^2u(\underline{x})&=&\displaystyle-\frac{\partial u}{\partial x_2}(\underline{x})-i\beta_m u(\underline{x})=
-2i\beta_m\sin(\alpha_m x_1), \quad \underline{x}=(x_1,0),\\
\mathcal{L}^3u(\underline{x})&=&\phantom{-}\displaystyle\frac{\partial u}{\partial x_2}(\underline{x})-i\beta_m u(\underline{x})=\phantom{-}0,\phantom{i\beta_m\sin(\alpha_m x_1)}\quad
 \underline{x}=(x_1,1),
\label{eq:2Da}
\end{eqnarray}
where $\alpha_m=\frac{m\pi}{L_1}$, $\beta_m=\sqrt{\kappa^2-\alpha_m^2}$, and $m\geq 1$ 
is an integer. These conditions allow for just one propagating mode in the solution,
which is given by $u(\underline{x})=\exp(i\beta_m x_2)\sin(\alpha_m x_1)$, assuming
a constant $\kappa$. It should be noted that if $\kappa$ and $m$ are chosen such that $\beta_m=0$, the problem is not well-defined, and we avoid such combinations in the experiments.

The third and final problem is also two-dimensional, but the domain $\Omega$ 
is now enclosed between two curves $\gamma_1(x_2) < x_1 < \gamma_2(x_2)$,
$x_2\in(0,1)$, see Figure~\ref{fig:1}. The Dirichlet condition~(\ref{eq:L3}) is modified to hold at $\gamma_1$ and $\gamma_2$.
\begin{equation}
\mathcal{L}^4u(\underline{x})=u(\underline{x})=0,\quad \underline{x}=(\gamma_j(x_2),x_2), \quad j=1,2.
\label{eq:L3b}
\end{equation}
At the left
and right boundary, we use so called Dirichlet--to--Neumann map (DtN) radiation
boundary conditions~\cite{KeGi89}
%
%
\begin{equation}
\begin{array}{lclcrl}
 \mathcal{L}^2u(\underline{x})&=&\displaystyle-\frac{\partial u}{\partial x_2}
      -i\sum_{m=1}^\infty \beta_m\langle u(\cdot,0),\psi_m^{0} \rangle\psi_m^{0}(x_1)\\
      &=&-2i\displaystyle
\sum_{m=1}^\infty A_m \beta_m \psi_m^0(x_1), & x_2=0,\\               
 \mathcal{L}^3u(\underline{x})&=&\displaystyle\frac{\partial u}{\partial x_2}-i\sum_{m=1}^\infty
\beta_m\langle u(\cdot,1),\psi_m^1 \rangle\psi_m^1(x_1)=0, & x_2=1,
\end{array}
\label{eq:DtN}
\end{equation}
where, for a fixed $x_2$, the modes 
$\psi_m^{x_2}=\sqrt{2}\sin(\alpha_m (x_1-\gamma_1(x_2))$, with 
$\alpha_m=\frac{m\pi}{\gamma_2(x_2)-\gamma_1(x_2)}$. 
The inner product is given by
\begin{equation}
\langle u(\cdot,x_2),\psi_m^{x_2} \rangle=
 \int_{\gamma_1(x_2)}^{\gamma_2(x_2)} u(x_1,x_2)\psi_m^{x_2}(x_1)\,dx_1,
\end{equation}
and the amplitudes $A_m = \psi_m^0(x_s)$, where $x_s$ is the position of the 
source in the vertical coordinate. 
The amplitudes are chosen to emulate a point source. The DtN conditions 
allow for any combination of modes to move transparently through the vertical 
boundaries. For practical and
computational reasons, the infinite sum is truncated at 
$\mu_{x_2}=\lfloor\frac{\kappa (\gamma_2(x_2)-\gamma_1(x_2))}{\pi}\rfloor$. For a discussion of the assumptions behind 
this truncation and these particular DtN conditions, see~\cite{OttoLa99}.
%
%
\begin{figure}[!htb]
\centering
\includegraphics[width=0.5\textwidth]{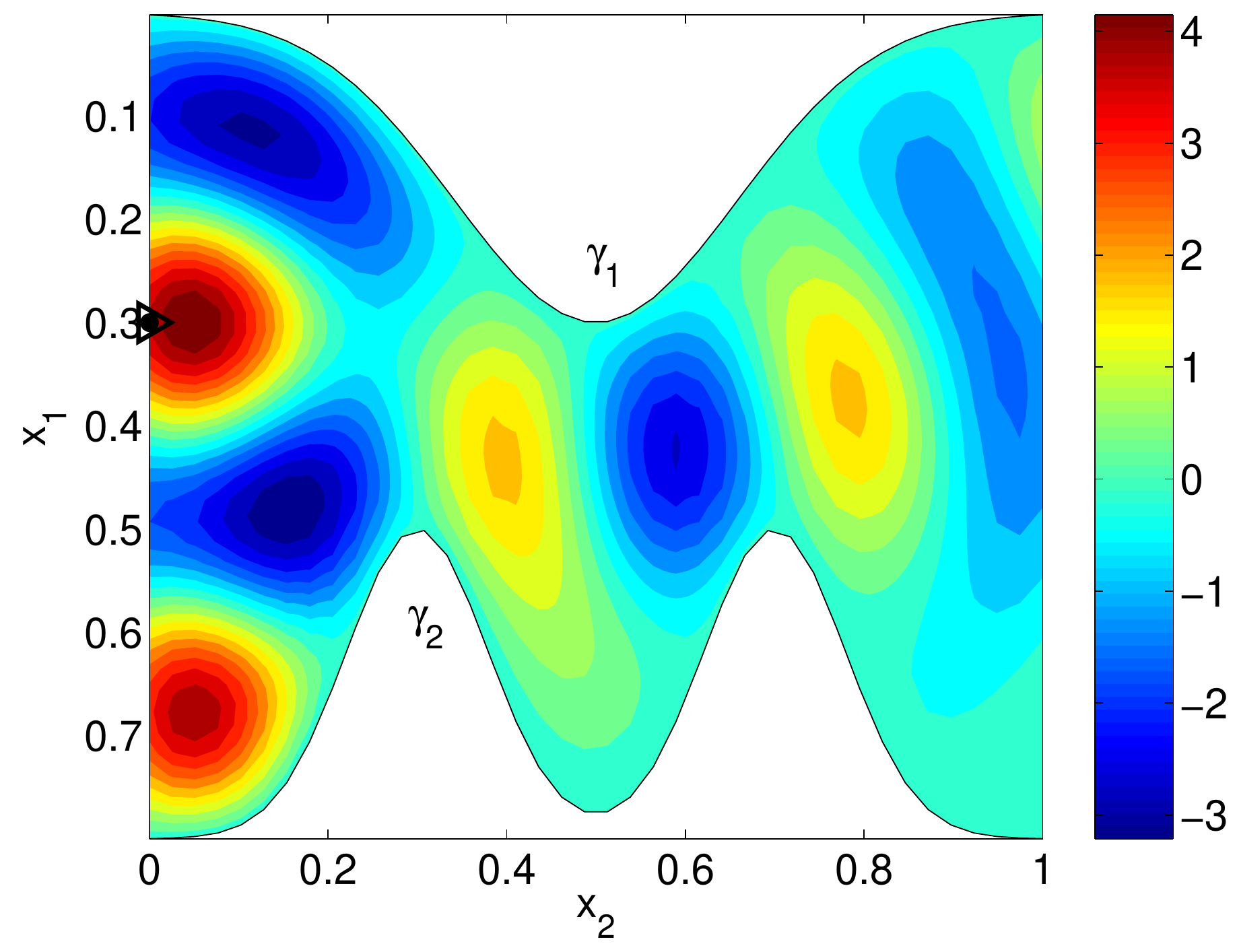}
%
%
\caption{Wave propagation in an M-shaped duct. The source position is indicated by the marker at the left boundary and the wave number is $\kappa=6\pi$.
The real part of the solution is displayed.}
\label{fig:1}
\end{figure}

\section{The RBF approximations}\label{sec:coll}
In this section, we first describe Kansa's non-sym\-metric collocation 
method~\cite{Kansa90} for our model problems. The main advantage of the 
non-symmetric
collocation method is its simplicity. This is also why we use this method
for both numerical and theoretical studies throughout this paper.
However, an argument against using non-symmetric collocation is that
the RBF approximation matrix, in rare cases~\cite{HonSch01},
can become singular. This is discussed further in Section~\ref{sec:sing}.
To avoid singularity, symmetric collocation~\cite{Wu92,Fass97,FraSch98} can 
instead be employed. This is slightly
more involved, especially with non-trivial operators, which is why we
include an example of how to do this for the one-dimensional model problem.
 
\subsection{Non-symmetric collocation}
\label{sec:non-symm}
When we use non-symmetric collocation to discretize the 
problem~(\ref{eq:general}), the RBF approximant is given by
\begin{equation}
s(\underline{x}) = \sum_{j=1}^N \lambda_j \phi(\ep{}\|\underline{x}-\underline{x}_j\|)=\sum_{j=1}^N \lambda_j \phi_j(\underline{x}),
\label{eq:RBFapprox}
\end{equation}
where $\underline{x}_j$, $j=1,\ldots,N$ are the RBF center points and $\ep{}$ is the 
shape parameter. 
The collocation conditions are imposed at the $N$ center points. 
Let $\underline{x}_j^k$, $j=1,\ldots,N_k$ be the subset of center points that 
belong to the region or section $\Omega^k$. The corresponding operator is used for 
collocation, and we get the equations 
\[\mathcal{L}^ks(\underline{x}^k_i)=\sum_{j=1}^N \lambda_j \mathcal{L}^k\phi_j(\underline{x}^k_i)=f^k(\underline{x}^k_i),\quad i=1,\ldots,N_k,\quad k=1,\ldots,N_{\mathrm{op}}.\]
If the points are ordered according to the set affiliation, we get a system
of equations, $M\underline{\lambda}=\underline{f}$, with the following general block structure
\begin{equation}
\left(\begin{array}{ccc}
&\mathcal{L}^1\phi& \\ &\vdots& \\ &\mathcal{L}^{N_{\mathrm{op}}}\phi&
\end{array}\right)
\left(\begin{array}{c}
\rule{0ex}{2.5ex}\\\underline{\lambda}\\\rule{0ex}{2.5ex}\hspace{0mm}
\end{array}\right)=
\left(\begin{array}{c}
\underline{f}^1\\ \vdots \\ \underline{f}^{N_{\mathrm{op}}}
\end{array}\right),
\label{eq:sys}
\end{equation}
where the block $\mathcal{L}^k\phi$ is of size $(N_k\times N)$.

Applying the operators in the specific model problems to the RBFs is 
straightforward, except for the DtN operators in~(\ref{eq:DtN}).
The left boundary condition applied to one of the RBFs and evaluated at the 
point $\underline{x}=(x_1,0)$ takes the form
\[\mathcal{L}^2\phi_j(\underline{x})=
-\frac{\partial \phi_j}{\partial x_2}(\underline{x})
      -i\sum_{m=1}^{\mu_0} \beta_m\langle \phi_j(\cdot,0),\psi_m^0 \rangle
\,\psi_m^0(x_1).\]
To form the whole block $\mathcal{L}^2\phi$, we need to evaluate $\mu_0\cdot N$ inner 
products. This cannot in general be done analytically for infinitely smooth RBFs such 
as multiquadrics, inverse quadratics, or Gaussians. 

One of our aims with choosing the Helmholtz model problems was to see if 
using RBFs would make it difficult to implement non-trivial boundary 
conditions. There are no fundamental issues preventing implementation of boundary conditions involving linear functionals applied to the basis functions. A practical issue is that the computational cost for the quadrature is quite large, although linear 
in $N$. In Section~\ref{sec:exp}, we investigate how accurately we need to compute the inner products to not destroy the overall accuracy of the solution.
The experiments show that we need to compute the inner products more accurately than the overall error tolerance, which increases the cost further. 


%
%

\subsection{Symmetric collocation}
Non-singularity of the RBF approximation matrix can be ensured through
symmetric 
collocation~\cite{Wu92,Fass97,FraSch98}. The idea is to view the RBF
$\phi(\ep{}\|\underline{x}-\underline{\xi}\|)$ as a function of two variables
$\psi(\underline{x},\underline{\xi})$. Then in the ansatz for the 
RBF approximation, for each basis function, the operator
corresponding to its center location is applied to the second argument 
of the basis function. Since we consider complex operators, we also need
to conjugate the operators in order to get a Hermitian matrix in the end. 
The approximation then takes the form 
\[s(\underline{x})=\sum_{k=1}^{N_{\mathrm{op}}}\sum_{j=1}^{N_k}\lambda^k_j
\overline{\mathcal{L}^k_\xi}\psi(\underline{x},\underline{x}^k_j).\]
For the one-dimensional Helmholtz problem, 
collocation with this ansatz leads to a system of equations
with the following structure
%
%
%
\[\left(\begin{array}{lll}
\mathcal{L}^1_x\overline{\mathcal{L}^1_\xi}\psi & \mathcal{L}^1_x\overline{\mathcal{L}^2_\xi}\psi & \mathcal{L}^1_x\overline{\mathcal{L}^3_\xi}\psi\\
\mathcal{L}^2_x\overline{\mathcal{L}^1_\xi}\psi & \mathcal{L}^2_x\overline{\mathcal{L}^2_\xi}\psi & \mathcal{L}^2_x\overline{\mathcal{L}^3_\xi}\psi\\
\mathcal{L}^3_x\overline{\mathcal{L}^1_\xi}\psi & \mathcal{L}^3_x\overline{\mathcal{L}^2_\xi}\psi & \mathcal{L}^3_x\overline{\mathcal{L}^3_\xi}\psi\\
\end{array}\right)
\left(\begin{array}{l}
\underline{\lambda}^1\\\underline{\lambda}^2\\\underline{\lambda}^3
\end{array}\right)=
\left(\begin{array}{c}
\underline{0}\\-2i\kappa\\0
\end{array}\right)
,\]
where the block $\mathcal{L}^j_x\overline{\mathcal{L}^k_\xi}\psi$ is of size $(N_j\times N_k)$. To 
see that the coefficient matrix $M$ really is Hermitian, we can use the following differentiation rules for the RBFs
\begin{eqnarray}
  \frac{\partial^n}{\partial \xi^n}\psi(\underline{x}_j,\underline{x}_k) &=& (-1)^n\frac{\partial^n}{\partial x^n}\psi(\underline{x}_j,\underline{x}_k),\\
  \frac{\partial^n}{\partial x^n}\psi(\underline{x}_k,\underline{x}_j) &=& (-1)^{n}\frac{\partial^n}{\partial x^n}\psi(\underline{x}_j,\underline{x}_k).
\end{eqnarray}  
We can then show for the different blocks in the matrix that the matrix elements satisfy $m_{jk}=\overline{m}_{kj}$. As an example, for elements in the first two off-diagonal blocks we get
{\small
\begin{align*}
\mathcal{L}^1_x\overline{\mathcal{L}^2_\xi}\psi(\underline{x}_j,\underline{x}_k)&=
(-\frac{\partial^2}{\partial x^2}-\kappa^2)
(-\frac{\partial}{\partial \xi}+i\bar{\kappa})\psi(\underline{x}_j,\underline{x}_k)
= (-\frac{\partial^2}{\partial x^2}-\kappa^2)
(\frac{\partial}{\partial x}+i\bar{\kappa})\psi(\underline{x}_j,\underline{x}_k),
\\
\overline{\mathcal{L}^2_x\overline{\mathcal{L}^1_\xi}}\psi(\underline{x}_k,\underline{x}_j)&=(-\frac{\partial}{\partial x}+i\bar{\kappa})
(-\frac{\partial^2}{\partial \xi^2}-\kappa^2)\psi(\underline{x}_k,\underline{x}_j)
=(\frac{\partial}{\partial x}+i\bar{\kappa})
(-\frac{\partial^2}{\partial x^2}-\kappa^2)\psi(\underline{x}_j,\underline{x}_k).
\end{align*}
}

Apart from the important non-singularity property, limited numerical 
experiments also show that the conditioning is slightly better (one order of 
magnitude) than for the non-symmetric method. However, the error curves, as 
functions of both $x$ and $\ep{}$, are close to identical.

It would be complicated to implement the symmetric collocation
method for the two-dimensional problem with DtN boundary conditions. 
It would also be even more costly than for the non-symmetric case, because of the increased number of integrals
to 
compute.
As mentioned for example in~\cite{HonSch01}, when 
using non-symmetric collocation, singular matrices are hardly ever observed. Due to its simplicity, non-symmetric collocation is more widely used than symmetric collocation.
In the following, we choose to study the properties of the non-symmetric 
collocation method.

\section{Singularity of the RBF collocation matrix}
\label{sec:sing}
%
%
As already stated, the RBF collocation matrix may become singular with the 
non-symmetric collocation approach.
This becomes particularly clear for problems with a parameter that can be
varied freely as for our Helmholtz examples.
For the one-dimensional Helmholtz model problem,
we can in fact show that for any given node distribution 
(with distinct nodes) there are always wavenumbers $\kappa$
that lead to a singular collocation matrix.

To get the equations in an appropriate form for eigenvalue analysis, we multiply the PDE~\eqref{eq:HZ} with $-1$ and the boundary conditions~(\ref{eq:1Dbca}) and~\eqref{eq:1Dbcb} with $i\kappa$.
After collocation with the PDE at the interior points $x^1_j$,
and the boundary conditions at the boundary points, we get a collocation
matrix $M$ with elements
\[m_{jk}=\left\{\begin{array}{lcll@{,\quad}l@{\quad}l}
\kappa^{2}\phi_k(x^1_j)&+&&\phi_k^{\prime\prime}(x^1_j) & j=1,\ldots,N-2,&k=1,\ldots,N,\\
\kappa^2 \phi_k(0)&-&i\kappa \phi_k^\prime(0)&& j=N-1,&k=1,\ldots,N,\\
\kappa^2 \phi_k(1)&+&i\kappa \phi_k^\prime(1)&& j=N,&k=1,\ldots,N.
\end{array}\right.\]
We can express $M$ as a matrix polynomial in $\kappa$,
\[M=\kappa^2A+\kappa iB+C,\]
where $A$, $B$, and $C$ are real matrices. Furthermore, $A$ is the usual RBF 
interpolation matrix. The question of singularity of $M$
can be posed as a quadratic eigenproblem
\begin{equation}
(\kappa^2A+\kappa iB+C)\underline{v}=\underline{0}.
\label{eq:qep}
\end{equation}
For standard RBFs and distinct points, $A$ is non-singular. By introducing
$\underline{w}=\kappa\underline{v}$ we can then 
reformulate~(\ref{eq:qep}) as a standard eigenvalue problem
\[\left(\begin{array}{rr}0 & I\\-A^{-1}C & -iA^{-1}B\end{array}\right)
  \left(\begin{array}{c}\underline{v} \\ \underline{w}\end{array}\right)=\kappa
\left(\begin{array}{c}\underline{v} \\ \underline{w}\end{array}\right).\]
Solving this problem leads to $2N$ eigenvalues. That is, values of $\kappa$
for which the collocation matrix $M$ is singular. Two of the eigenvalues 
have to be $\kappa=0$ because of the scaling of the 
boundary conditions.
By conjugating equation~(\ref{eq:qep}), we get
\[(\bar{\kappa}^2A-\bar{\kappa}iB+C)\underline{\bar{v}}=
((-\bar{\kappa})^2A+(-\bar{\kappa})iB+C)\underline{\bar{v}}=
\underline{0}.\]
That is, if $(\kappa,\underline{v})$ is an eigenvalue--eigenvector pair, then $(-\bar{\kappa},\bar{\underline{v}})$ also is.
Hence, all eigenvalues with $\mathrm{Re}(\kappa)\neq 0$ must come in pairs
($\kappa$, $-\bar{\kappa}$). Then, there may also be a number of eigenvalues
on the imaginary axis. 
The $\kappa$ that are of interest in the Helmholtz problem are such that
$\mathrm{Re}(\kappa)>0$. We are then left with a maximum of $N-1$ potentially
interesting wavenumbers that lead to a singular problem.

In Figure~\ref{fig:eigs1D}, the eigenvalues that lead to a singular system are computed for different problem sizes using multiquadric and Gaussian RBFs. For multiquadrics, there are no eigenvalues in the region of interest, that is, real wavenumbers with solutions that are well resolved. The eigenvalues with the largest real part are closest to the real axis. These problems are resolved to $2\pi N/\kappa\approx$ 2 points per wavelength, which is the theoretical lowest possible resolution to use for a wave propagation problem.
The Gaussian approximation produces eigenvalues that are closer to the real axis, but also here the eigenvalues with large real part correspond to badly resolved problems. It should be noted that complex wavenumbers, typically with a significantly smaller imaginary part than real part, are used in practical applications to model damping within the media that the waves propagate through.
\begin{figure}[!htb]
\centering
\includegraphics[height=0.3\textwidth]{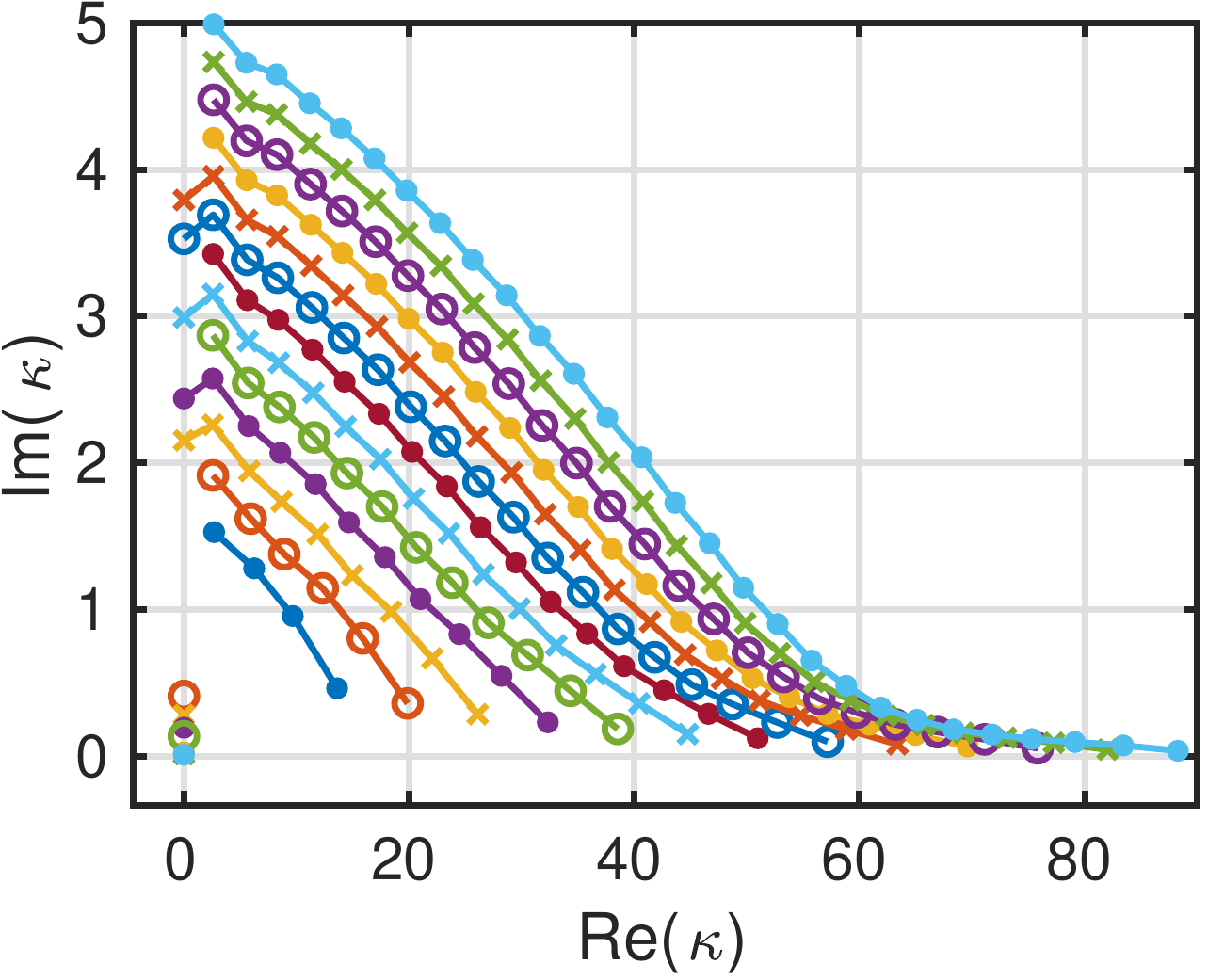}
\includegraphics[height=0.3\textwidth]{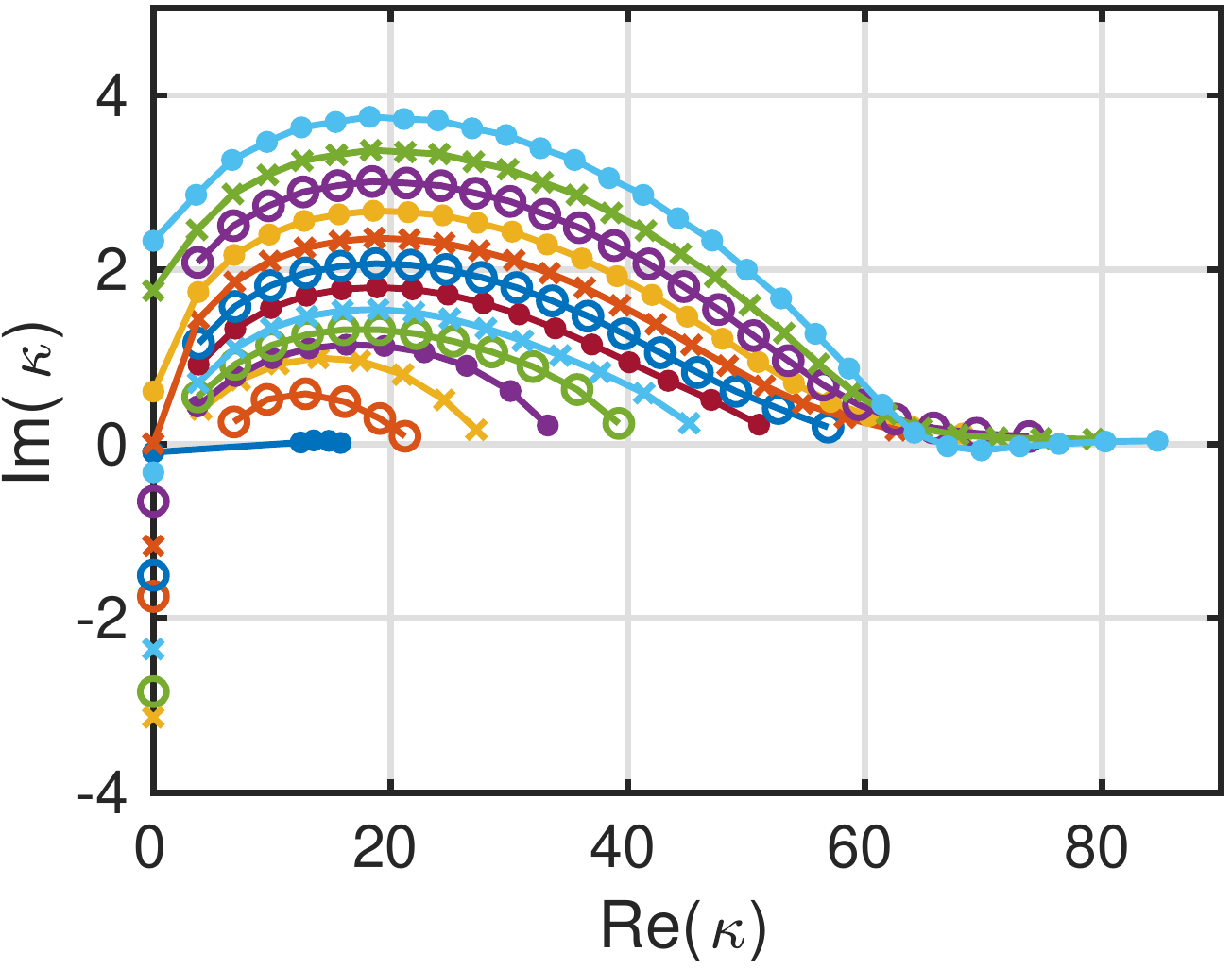}
%
%
\caption{The wavenumbers that lead to a singular system for the one-dimensional problem using $N=6,8,\ldots,30$ from bottom to top, for multiquadric RBFs with $\ep{}=5$ (left) and Gaussian RBFs with $\ep{}=10$ (right).}
\label{fig:eigs1D}
\end{figure}



\section{The flat RBF limit for PDE problems}\label{sec:limit}


The limits of multivariate RBF interpolants as the shape parameter $\ep{}$
goes to zero were analyzed in~\cite{LaFo05,Schaback05,LeYoYo07,Schaback08,LeMiYo15}. The same type of limits for finitely smooth RBFs where also studied in~\cite{SoRiFaHi12,LeMiYo14}. It was shown that the limit 
behavior is closely related to polynomial unisolvency~\cite{Bos91} on the set 
of node points. We define
\begin{equation}
  N_{K,d}=\mover{K+d}{K},
\end{equation}
which is the dimension of the space of polynomials of degree $K$ in $\mathbb{R}^d$. If $N=N_{K,d}$, and the node set is unisolvent, then the (infinitely smooth) flat limit RBF interpolant reproduces the multivariate polynomial interpolant of degree $K$ on these nodes.

When we apply the non-symmetric collocation method to a PDE
problem, the RBF approximant has the same general form~(\ref{eq:RBFapprox}),
and we can derive corresponding results for the limit. 

In order to express the conditions for different limit results, we need to 
define two matrices, $P$ and $Q$, from which we can determine 
polynomial unisolvency and unisolvency of the discrete PDE problem. 
Let $\{p_j(\underline{x})\}_{j=1}^N$ be $N$ linearly independent monomials 
of minimal degree in
$d$ dimensions. For example, for $N=7$ and $d=2$, we can 
choose $\{1,\,x,\,y,\,x^2,\,xy,\,y^2,\,x^3\}$.
%
If $N_{K-1,d}<N\leq N_{K,d}$, then the degree of $p_N(\underline{x})$ is $K$.

The set of node points $\{x_i\}_{i=1}^N$ 
satisfies polynomial unisolvency if there, for any given data at the node 
points, is a unique linear combination 
$\sum_{j=1}^N\beta_jp_j(\underline{x})$ that interpolates the data. 
This is equivalent to non-singularity of the matrix 
\begin{equation}
P=\left(
\begin{array}
[c]{cccc}%
p_{1}(\underline{x}_{1}) & p_{2}(\underline{x}_{1}) & \cdots & p_{N}%
(\underline{x}_{1})\\
p_{1}(\underline{x}_{2}) & p_{2}(\underline{x}_{2}) & \cdots & p_{N}%
(\underline{x}_{2})\\
\vdots & \vdots &  & \vdots\\
p_{1}(\underline{x}_{N}) & p_{2}(\underline{x}_{N}) & \cdots & p_{N}%
(\underline{x}_{N})
\end{array}
\right).
\end{equation}
In cases where $P$ is singular, we instead construct a minimal
non-degenerate basis~\cite{LaFo05}. Such a basis can be constructed by choosing $N$ 
monomials of smallest possible degree under the constraint that they give linearly 
independent columns in the matrix $P$. The highest selected monomial degree $M$ is then also the degree of $p_N(\underline{x})$. As an example, for $N=5$ node points all located on the line $x=y$, a minimial non-degenerate basis is  $\{1,\,x,\,x^2,\,x^3,\,x^4\}$ and $M=4$.

Unisolvency of the discrete PDE problem on the set of node points $\{x_i\}_{i=1}^N$ with respect to $\{p_j(\underline{x})\}_{j=1}^N$ requires a unique
linear combination $\sum_{j=1}^N\beta_jp_j(\underline{x})$ that satisfy the 
collocation conditions
\[\sum_{j=1}^N \beta_j \mathcal{L}^kp_j(\underline{x}^k_i)=f^k(\underline{x}^k_i),\quad i=1,\ldots,N_k,\quad k=1,\ldots,N_{\mathrm{op}}.\]
This is equivalent to non-singularity of the matrix 
\begin{equation}
Q=\left(\begin{array}{ccc}
\mathcal{L}^1p_1(\underline{x}^1_1) &\cdots &\mathcal{L}^1p_N(\underline{x}^1_1)\\
 \vdots &&\vdots\\ 
\mathcal{L}^{N_{\mathrm{op}}}p_1(\underline{x}^{N_{\mathrm{op}}}_{N_{N_{\mathrm{op}}}})&\cdots&\mathcal{L}^{N_{\mathrm{op}}}p_N(\underline{x}^{N_{\mathrm{op}}}_{N_{N_{\mathrm{op}}}})
\label{eq:Q}
\end{array}\right).
\end{equation}

As in~\cite{LaFo05}, we need the RBFs to fulfill three conditions in order to get the
results in the theorem given below. We repeat the conditions and discuss their validity
briefly here, but for a full explanation, we refer the reader to~\cite{LaFo05}. 
\begin{itemize}
\item[(I)] The RBF $\phi(r)$ can be Taylor expanded as $\phi(r)=\sum_{j=0}^\infty a_jr^{2j}$.
\item[(II)] The PDE collocation matrix $M$ in system~(\ref{eq:sys}) is non-singular in the
            interval $0<\ep{}\leq R$, for some $R>0$.
\item[(III)] Certain matrices $A_{p,J}$, built from the coefficents $a_j$ in the Taylor 
expansion of $\phi(r)$, are non-singular for $0\leq p\leq d$ and $0\leq J \leq K$.
\end{itemize}
Condition (I) is true for all infinitely smooth RBFs that are commonly used.  Condition (II) is likely to hold for some value of $R$, but the
previous section shows that $M$ can become singular at any $\ep{}$, given a specific
combination of PDE problem and node points.  
Condition (III) was shown to hold for these RBFs in~\cite{LeYoYo07}.






%
%


%
%

%
%

The following theorem gives the different possibilities for the limiting RBF approximant
as the shape parameter $\ep{}\rightarrow 0$.
\begin{theorem}
Assume that the RBF $\phi(r)$ fulfills conditions (I)--(III) and that the 
number of node points satisfy $N_{K-1,d}<N\leq N_{K,d}$. The degree of a minimal 
non-degenerate basis for the point set is either $K$ for a unisolvent set or $M$ for a 
non-unisolvent set. If 
\begin{itemize}
\item[(i)] $P$ and $Q$ are non-singular, the limit exists and is a polynomial of deg $K$.
If $N=N_{K,d}$ it is the unique degree $K$ polynomial solution to the discrete PDE 
problem, otherwise the final polynomial depends on the choice of RBF.
\item[(ii)] $P$ is singular, but $Q$ is non-singular, the limit exists and is an
RBF-dependent polynomial of degree $M$.

\item[(iii)]$P$ is non-singular, but $Q$ is singular, divergence will occur unless 
the right hand side $\underline{f}$ of system~(\ref{eq:sys}) happens to be in the range 
of $Q$. If there is just a single null-space polynomial $n(\underline{x})$ of degree $K$, 
the divergent term is proportional to $\ep{-2}n(\underline{x})$.

\item[(iv)] $P$ has a nullspace of dimension $m>0$ and $Q$ has a nullspace of dimension 
$p>0$, then if $m\geq p$ the limit is likely, but not certain to 
exist. If it exists it is of degree $M$. If $m<p$ divergence is likely, but not certain. 
\end{itemize}
\label{th:1}
\end{theorem}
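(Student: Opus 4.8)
The plan is to follow the flat-limit analysis of \cite{LaFo05}, adapting it from interpolation to collocation. Since the collocation matrix $M$ is non-singular on $0<\ep{}\le R$ by condition~(II), I would write the approximant as $s(\underline{x})=\underline{g}(\underline{x})^\top M^{-1}\underline{f}$, where $\underline{g}(\underline{x})=(\phi_1(\underline{x}),\ldots,\phi_N(\underline{x}))^\top$ is the vector of (un-differentiated) basis functions evaluated at $\underline{x}$. The whole argument then reduces to an asymptotic analysis of this expression as $\ep{}\rightarrow 0$. Using condition~(I), I would expand $\phi(\ep{}r)=\sum_{l\ge 0}a_l\ep{2l}r^{2l}$ and expand each $r^{2l}=\|\underline{x}-\underline{x}_j\|^{2l}$ into monomials in $\underline{x}$ and in the centre $\underline{x}_j$. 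This turns both $M$ and $\underline{g}(\underline{x})$ into convergent power series in $\ep{2}$ whose coefficient matrices are products of monomial-evaluation matrices and fixed arrays of the Taylor coefficients $a_l$.

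The core step is to reorganise these series so that the matrices $P$, $Q$, and the coefficient arrays $A_{p,J}$ of condition~(III) appear explicitly. The leading blocks of $M$ are governed by the operators applied to the low-degree monomials, that is, by $Q$; the evaluation vector $\underline{g}(\underline{x})$ is governed at leading order by polynomial reproduction on the nodes, that is, by $P$. Because $M$ degenerates as $\ep{}\rightarrow 0$ (its $\ep{}$-independent part is rank deficient), $M^{-1}$ cannot be expanded naively. Instead I would split the coefficient space into the part aligned with the chosen monomial basis and its complement, and eliminate by Schur complements, exactly as in \cite{LaFo05}. Condition~(III) is what guarantees that the intermediate coefficient arrays $A_{p,J}$ are invertible, so that each elimination step is well defined and the powers of $\ep{}$ can be tracked block by block. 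The individual coefficients $\lambda_j$ generically blow up, but the monomial structure forces the divergent contributions to $s(\underline{x})=\underline{g}(\underline{x})^\top\underline{\lambda}$ to cancel in the benign cases.

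The four cases then follow by inspecting which leading matrix controls the surviving term. When $Q$ is non-singular the reduced polynomial collocation problem is uniquely solvable, so the leading term of $\underline{\lambda}$ is finite and $s(\underline{x})$ converges; non-singularity of $P$ (case~(i)) makes the limiting polynomial reproduce cleanly, giving degree $K$, and when $N=N_{K,d}$ the limit is pinned down as the unique degree-$K$ polynomial solving the discrete problem, whereas for $N<N_{K,d}$ the particular degree-$K$ polynomial is selected by the RBF. If $P$ is singular (case~(ii)) I would replace the monomials by a minimal non-degenerate basis of degree $M$ and repeat the argument, the RBF dependence entering through the choice of higher monomials. If instead $Q$ is singular (case~(iii)) the leading reduced matrix is singular, so one negative power of $\ep{}$ survives; tracking that the first correction enters at order $\ep{2}$ shows the divergent part is $\ep{-2}$ times a null-space polynomial $n(\underline{x})$ of $Q$, with coefficient vanishing precisely when $\underline{f}$ lies in the range of $Q$. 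Case~(iv) combines both mechanisms, and whether the singular directions of $Q$ can be absorbed by the nullspace of $P$ depends on the next-order coefficient arrays, which accounts for the ``likely but not certain'' conclusions according to whether $m\ge p$ or $m<p$.

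The main obstacle is the Schur-complement analysis of $M^{-1}$ in the presence of the singular leading block: one must prove that the potentially divergent terms in $s(\underline{x})$ cancel in cases~(i)--(ii), and pin down the exact surviving power $\ep{-2}$ and its coefficient in case~(iii). This is where condition~(III) is indispensable, and where the asymmetry between $P$ (centres/evaluation) and $Q$ (operators/collocation)---absent in the pure interpolation setting of \cite{LaFo05}---forces the finer four-way split. Giving a fully rigorous treatment of case~(iv), in particular justifying when convergence does and does not occur, is the most delicate part, since it rests on genericity of the higher-order arrays that can fail for special node--operator configurations.
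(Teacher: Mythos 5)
Your proposal is correct and follows essentially the same route as the paper's own proof sketch in Appendix~\ref{sec:A}: both adapt the flat-limit machinery of~\cite{LaFo05} to collocation, with the asymmetry between $P$ (centres/evaluation) and $Q$ (collocation conditions) driving the four-case analysis, and with condition~(III) licensing the order-by-order elimination. The only difference is organisational --- the paper imports the expansion $s(\underline{x},\ep{})=\ep{-2K}(\ep{-2q}P_{-q}(\underline{x})+\cdots+\ep{2K}P_K(\underline{x})+\cdots)$ wholesale from~\cite[Eq.~(28)]{LaFo05} and changes only the conditions the polynomials must satisfy (inhomogeneous PDE collocation conditions for $P_K$, homogeneous for $P_j$, $j\neq K$), then argues via moment/nullspace relations, whereas you re-derive the same asymptotics by a Schur-complement analysis of $M^{-1}$; the underlying computation and the dimension-counting arguments for cases (ii)--(iv) coincide.
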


The proof builds on the results for RBF interpolation in~\cite{LaFo05}. The 
key arguments and differences are pointed out in Appendix~\ref{sec:A}.

The uncomplicated case (i) is of course the most common and the other types are deviations
stemming from degenerate node point configurations or specific combinations of PDE problem
parameters and node points that lead to degeneracy. Below, we give an example of each
type of degeneracy for the two-dimensional Helmholtz problem given by~(\ref{eq:HZ}) and  
(\ref{eq:L3})--(\ref{eq:2Da}) with $m=1$.

%
%
\noindent
\subsection*{Example (ii): The node set is not polynomially unisolvent} \hspace{1mm}\\[1pt]
\parbox{0.2\textwidth}{\includegraphics[width=0.2\textwidth]{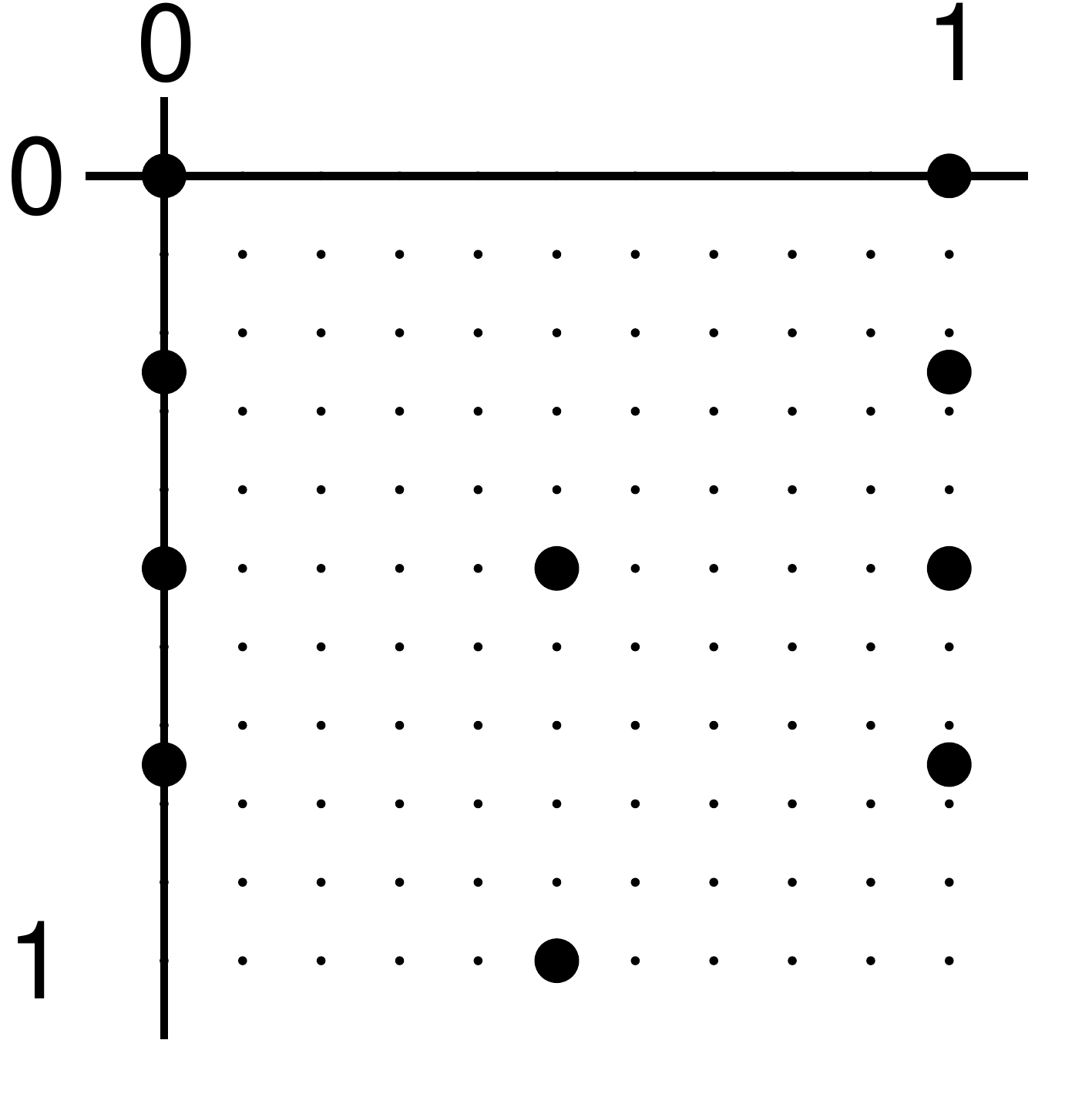}}
\parbox{0.8\textwidth}{
The $N=10$ points are $(1/2,1/2)$, $(1,1/2)$, and $(k/4,0)$,  
$(k/4,1)$, $k=0,\ldots,3$. The matrix $P$ has a nullspace defined by
$n(\underline{x})=x_2(x_2-\frac{1}{2})(x_2-1)$. A non-degenerate basis is given by $\{1,\,x_1,\,x_2,\,x_1^2,\,x_1x_2,\,x_2^2,\,x_1^3,\,x_1^2x_2,\,x_1x_2^2,\,x_1^4\}$ with $M=4$. The limit is hence a polynomial of degree four
whose coefficients depend on the choice of RBF. To illustrate what this
dependence can look like, we give the general form of the limit polynomial
$p(\underline{x})$.
}
%
%
\begin{eqnarray*}
p(\underline{x})&=& 
\beta_0+\beta_1x_2+\beta_2x_1+\beta_3x_2^2+\beta_4x_2x_1+\beta_5x_1^2\\
&+&\beta_6\left[-12a_3(x_2^3+3x_2x_1^2)+\frac{8a_2^2}{a_1}(x_2^3+x_2x_1^2) \right]\\
&+&\beta_7\left[-12a_3(x_1^3+3x_2^2x_1)+\frac{8a_2^2}{a_1}(x_1^3+x_2^2x_1)\right]\\
&+&\beta_8\left[-4a_3(5x_1^3+3x_2^2x_1)+\frac{8a_2^2}{a_1}(x_1^3+x_2^2x_1)\right]\\
&+&\beta_{9}\left[\phantom{\frac{1}{2}}-4a_4(9x_2^3+36x_2^2x_1+45x_2x_1^2+20x_1^3-24x_2^3x_1-40x_2x_1^3)\right.\\
&+&\left.\frac{6a_2a_3}{a_1} (3x_2^3+4x_2^2x_1+3x_2x_1^2+4x_1^3)
-\frac{72a_3^2}{a_2}(x_2^3x_1+x_2x_1^3)\right],
\end{eqnarray*}
where $a_j$ are the Taylor expansion coefficients of the RBF, and $\beta_j$ are the ten unknown coefficients that are determined by the ten discrete
PDE collocation conditions. The result is $\kappa$-dependent as 
well as RBF-dependent.

\subsection*{Example (iii): The node set is not PDE-unisolvent} \hspace{1mm}\\
%
%
\parbox{0.2\textwidth}{\includegraphics[width=0.2\textwidth]{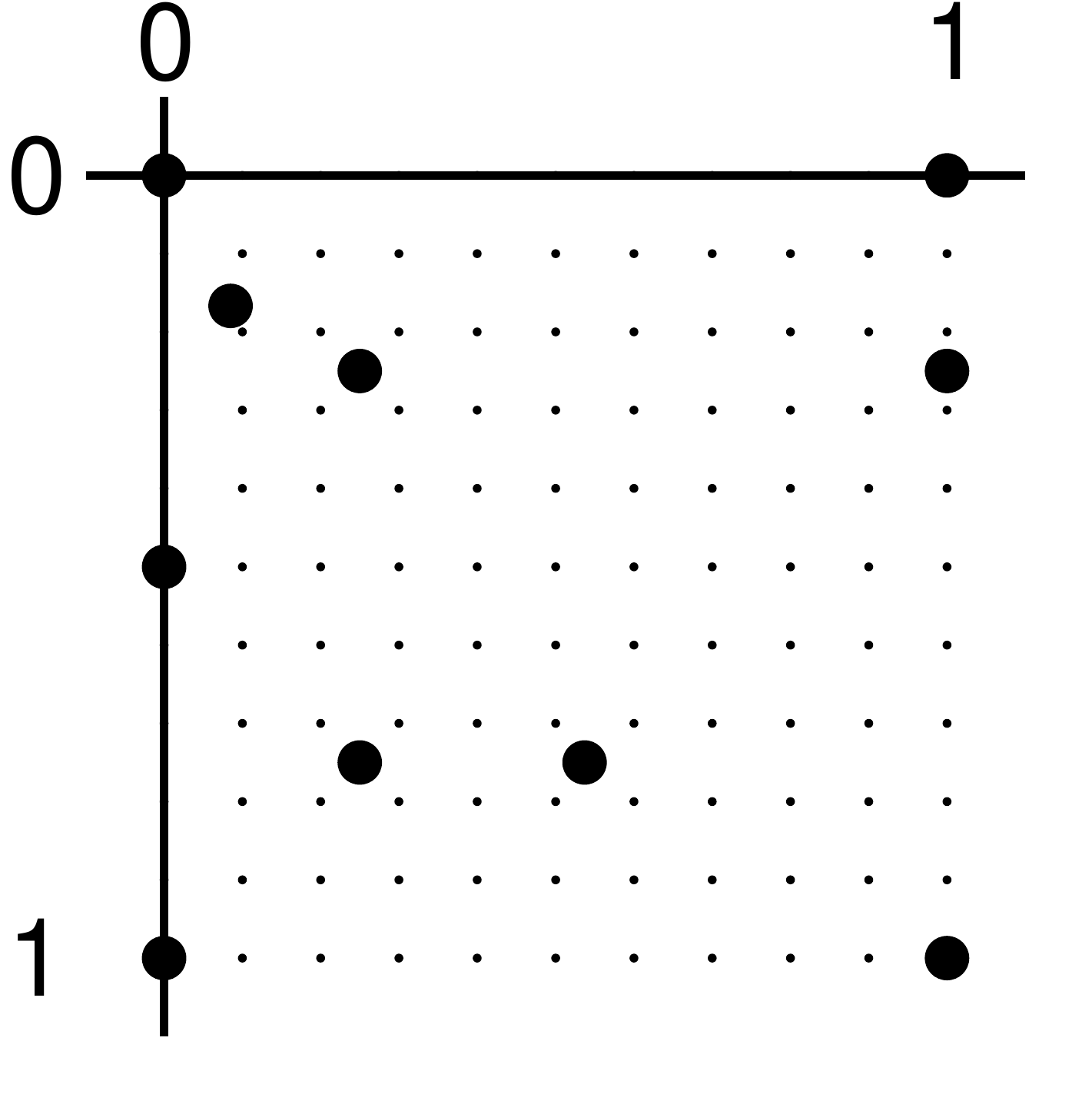}}
\parbox{0.8\textwidth}{The $N=10$ points are $(0,0)$, $(1/2,0)$, $(1,0)$, 
$(0,1)$, $(1/4,1)$, $(1,1)$, $(1/6,(2545-23\sqrt{9233})/3936)$, $(1/4,1/4)$, $(3/4,1/4)$, and $(3/4,969/1804)$. For $\kappa=4\sqrt{246}/9$ the matrix $Q$ 
has a nullspace defined by $q(\underline{x})=-\frac{5}{32}x_2(x_2+1)
+\frac{x_1}{16}(8-24x_1+3x_2+16x_1^2+4x_1x_2-7x_2^2)$.
}
In this case, we get divergence of order $\ep{-2}$ as $\ep{}\rightarrow 0$
for all RBFs that obey conditions (I)--(III).
This can be observed not only in exact arithmetic, but also in for example a 
double precision numerical simulation. However, if we move just one of the 
points or change $\kappa$ slightly, there is no longer a nullspace.
This kind of degeneracy is very rare, since it requires very special 
combinations of wavenumber and node points.

\subsection*{Example (iv): Both $P$ and $Q$ have a nullspace} \hspace{1mm}\\
\parbox{0.2\textwidth}{\includegraphics[width=0.2\textwidth]{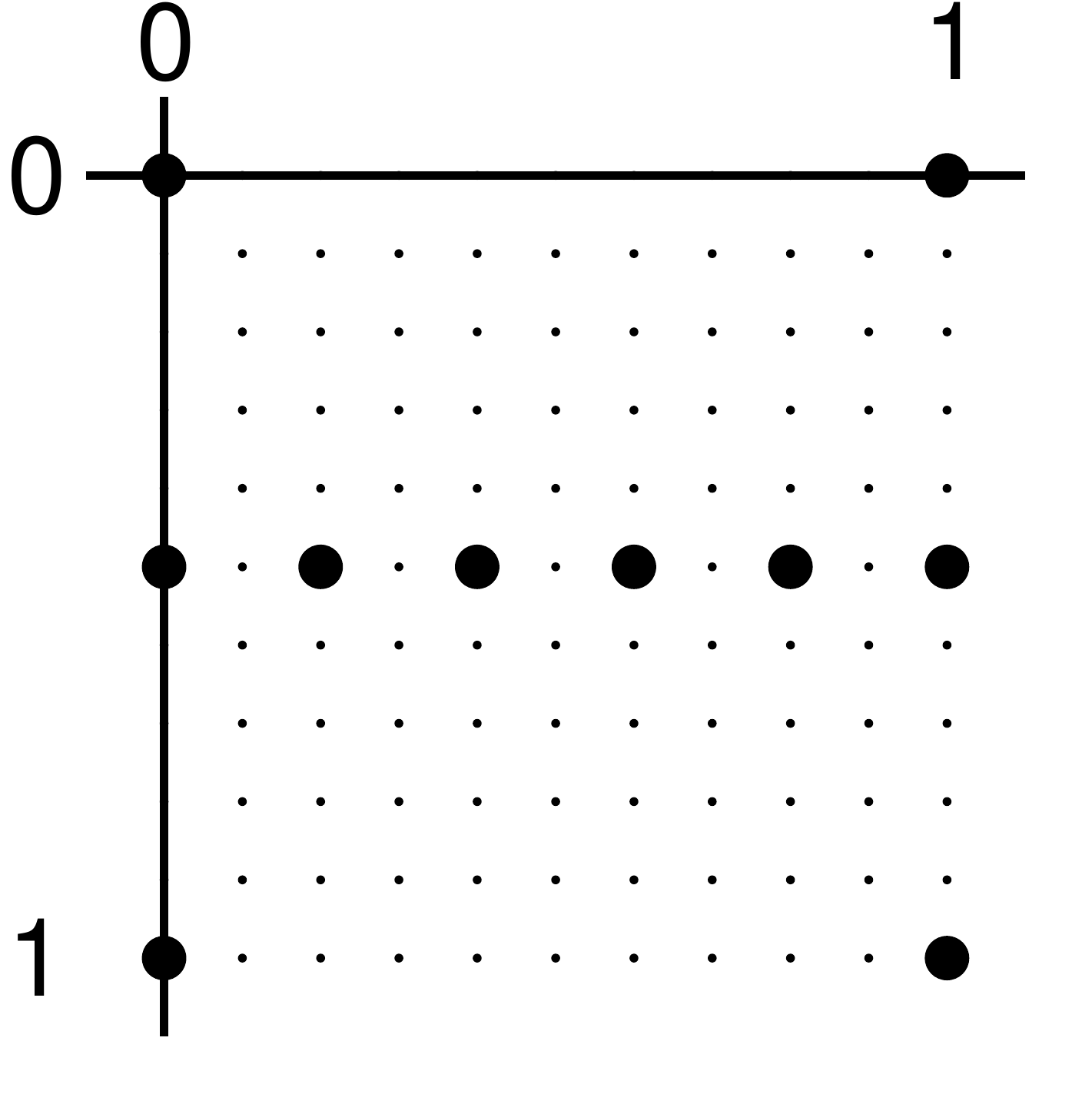}}
\parbox{0.8\textwidth}{The $N=10$ points are $(0,0)$, $(0,1)$, $(1,0)$, 
$(1,1)$ and $(1/2,k/5)$, $k=0,\ldots, 5$. The matrices $P$ and $Q$ have a
common nullspace of dimension two defined by 
$q_1(\underline{x})=x_1(x_1-\frac{1}{2})(x_1-1)$ and
$q_2(\underline{x})=x_2(x_1-\frac{1}{2})(x_2-1)$. The limit exists and 
is an RBF- and $\kappa$-dependent polynomial of degree $M=5$.

}

A practical implication of this result is that if we use node sets that are not unisolvent, e.g, Cartesian nodes, both PDE approximation and interpolation are expected to behave poorly in the small shape parameter regime. The condition number of the linear system is larger than in the unisolvent case, and the result contains a term that diverges as $\ep{}\rightarrow 0$.

An important property of interpolation with Gaussian RBFs is that it never diverges~\cite{Schaback05}. In the PDE case, this property also holds as long as $Q$ is non-singular. However, it can still be difficult to compute the limit numerically using a stable evaluation method. The RBF-QR method derived in~\cite{FoPi07,FoLaFly11}, and further explored for solving PDEs~\cite{LLHF13} uses pivoting to handle non-unisolvent cases. This means that a limit can always be computed, but it may be different from the Gaussian limit. The RBF-GA method~\cite{FoLePo13} always reproduces the correct Gaussian limit, but instead cannot handle large values of $N$. 

%
%
\section{Convergence properties and error estimates}\label{sec:theor}
In this section, we investigate errors and convergence properties from different perspectives, as well as quantify how the choice of shape parameter affects the results. We start by formulating general residual-based error estimates in the following subsection. 

\subsection{General error estimates using Green's functions}
We define the error function as the difference between the RBF approximant and
the exact solution to the PDE problem~(\ref{eq:general})
%
%
\begin{equation}
e(\underline{x})=s(\underline{x})-u(\underline{x}).
\end{equation}
In the interpolation case, the error and the residual are the same, and if the
function $u(\underline{x})$ is known at a point, the corresponding error can 
be explicitly computed. In the PDE case, we can compute the residual 
for each operator, while the error is governed by the same type of PDE as the 
solution
\begin{equation}
\mathcal{L}^ie(\underline{x})=\mathcal{L}^is(\underline{x})-f^i(\underline{x})\equiv r^i(\underline{x}), \quad \underline{x}\in \Omega^i,\quad i=1,\ldots,N_{\mathrm{op}},
\label{eq:generr}
\end{equation} 
where $r^i$ are residuals. One way to find the error is to solve the above PDE
problem. However, because the residuals are zero at the collocation points, 
they are highly oscillatory and more node points are required 
to approximate the error than to solve the original PDE.  


Instead of solving the error equation, we can formulate \textit{a posteriori} error estimates in terms of the residual. Writing out the error PDE for the one-dimensional problem we get



\begin{equation}
  \left\{\begin{array}{rcl}
-\Delta e(x) - \kappa^{2}e(x) &=&  r(x),\\
-e'(0) - i\kappa e(0)  &=& 0, \\
e'(1) - i\kappa e(1)  &=&  0.
  \end{array}\right.
\end{equation}
{A Green's function} satisfying the boundary conditions is given by
\begin{equation}
  G(x,\xi) = \frac{i}{2\kappa}e^{i\kappa|x-\xi|},
\end{equation}
with
\begin{equation}
  \frac{\partial G}{\partial \xi} = \left\{
  \begin{array}{rl}
     \frac{1}{2}e^{i\kappa|x-\xi|}, & x\geq\xi,\\
    -\frac{1}{2}e^{i\kappa|x-\xi|}, & x<\xi,
  \end{array}
  \right.
  \mbox{ and }
  \Delta_\xi G = -\frac{i\kappa}{2}e^{i\kappa|x-\xi|}-\delta(x),
\end{equation}
%
%
such that $-\Delta_\xi G-\kappa^2G=\delta(x)$, which allows us express the error as
\begin{equation}
  e(\xi)=\int_0^1G(x,\xi)r(x)\,dx.
\end{equation}  
We can use this form of the error to formulate an error estimate as
\begin{equation}
  \|e\|_\infty \leq \int_0^1|G(x,\xi)||r(x)|\,dx=
  \frac{1}{2\kappa}\int_0^1|r(x)|dx\leq \frac{1}{2\kappa}\|r\|_\infty.
  \label{eq:errest1}
\end{equation}
%
%
For the two-dimensional Helmholtz problem in a rectangular domain, the corresponding Green's function satisfying the boundary conditions is given by
\begin{equation}
  G(\underline{x},\underline{\xi})=\sum_{m=1}^\infty\frac{i}{2\beta_m}e^{i\beta_m|x_2-\xi_2|}
  \psi_m(x_1)\psi_m(\xi_1),
\end{equation}
with $-\Delta_\xi G -\kappa^2G= \delta(x_2)\sum_{m=1}^\infty\psi_m(x_1)\psi_m(\xi_1)$. Similarly as for the one-dimensional problem we define the error as
\begin{equation}
  e(\underline{\xi})=\int_0^1\int_0^{L_1}G(\underline{x},\underline{\xi})r(\underline{x})\,dx_1\,dx_2.
\end{equation}
To see how this works, we note that the vertical eigenfunctions form an orthonormal basis, and we can express the residual as
\begin{equation}
  r(x_1,x_2)=\sum_{m=1}^\infty\langle r(\cdot,x_2),\psi_m^{x_2}\rangle\psi_m^{x_2} \equiv \sum_{m=1}^\infty r_m(x_2)\psi_m(x_1).
\end{equation}
This allows us to simplify the error expression
\begin{eqnarray}
  e(\underline{\xi}) &=& \sum_{m=1}^\infty\frac{i}{2\beta_m}\int_0^1e^{i\beta_m|x_2-\xi_2|}\int_0^{L_1}\psi_m(x_1)\psi_m(\xi_1)\sum_{n=1}^\infty r_n(x_2)\psi_n(x_1)\,dx_1\,dx_2\nonumber\\
  &=&\sum_{m=1}^\infty\frac{i}{2\beta_m}\psi_m(\xi_1)\int_0^1e^{i\beta_m|x_2-\xi_2|}r_m(x_2)\,dx_2
\end{eqnarray}  
To convert this into error estimate, we first note that for $m\leq\mu_0=\lfloor\kappa L_1 /\pi\rfloor$, the horizontal wavenumber $\beta_m$ is real, and $|e^{i\beta_m|x_2-\xi_2|}|=1$, while for $m>\mu_0$, $\beta_m$ is purely imaginary and $|\int_0^1e^{i\beta_m|x_2-\xi_2|}\,dx_2|\leq |\int_0^1e^{i\beta_m|x_2-\frac{1}{2}|}\,dx_2|=\frac{2}{|\beta_m|}(1-e^{-\frac{1}{2}|\beta_m|})$. Then we get
\begin{eqnarray}
  \|e\|_\infty&\leq& \sum_{m=1}^{\mu_0}\frac{1}{\sqrt{2}\beta_m}\int_0^1|r_m(x_2)|\,dx_2 + \sum_{m=\mu_0+1}^\infty\frac{\sqrt{2}}{|\beta_m|^2}(1-e^{-\frac{1}{2}|\beta_m|})\int_0^1|r_m(x_2)|\,dx_2\nonumber\\ 
  & \leq & \sum_{m=1}^{\mu_0}\frac{1}{\sqrt{2}\beta_m}\|r_m\|_\infty+\sum_{m=\mu_0+1}^\infty\frac{\sqrt{2}}{|\beta_m|^2}(1-e^{-\frac{1}{2}|\beta_m|})\|r_m\|_\infty.
  \label{eq:err2}
\end{eqnarray}

For the two-dimensional problem in a domain with curved boundaries, we cannot provide an explicit Green's function. If we think about the curved domain as a sequence of thin almost rectangular domains, we can modify the previous estimate to get a heuristic approximation of the error
\begin{eqnarray}
  \|e\|_\infty&\approx& \sum_{m=1}^\infty\int_{\Re e(\beta_m)>0}\frac{|r_m(x_2)|}{\sqrt{2}\beta_m}\,dx_2 \nonumber\\
  &+&
  \sum_{m=1}^\infty\int_{\Im m(\beta_m)>0}\frac{\sqrt{2}}{|\beta_m|^2}(1-e^{-\frac{1}{2}|\beta_m|})|r_m(x_2)|\,dx_2.
  \label{eq:errest2}
\end{eqnarray}
We evaluate this error approximation numerically in Section~\ref{sec:exp} and find that we get surprisingly good results.

\subsection{Convergence properties for small $\ep{}$}\label{sec:smallep}
As discussed in the previous section, we approach the polynomial limit,
$s(\underline{x})=p(\underline{x})$,
as $\ep{}\rightarrow 0$. 
For polynomial \textit{interpolation} in one dimension,
the interpolation error $e_I(x)$ takes the form
\[e_I(x)=u(x)-p(x)=\frac{\prod_{j=1}^N (x-x_j)}{N!}u^{(N)}(\xi),\]
where $\xi \in (x_1,x_N)$. For equispaced points, $x_{j+1}-x_j=h$, this can 
be estimated by
\[|e_I(x)|\leq\frac{h^N}{N}\max_{\xi\in (x_1,x_N)}|u^{(N)}(\xi)|,\]
see~\cite[pp.\ 39--40]{GoOr92}. In the PDE case, the residual plays the role
of the error. By following the steps for the proof of the polynomial 
error~\cite[pp.\ 43--44]{GoOr92}, we can get a similar estimate for the
residual. 

\begin{theorem}
For a one-dimensional linear PDE problem 
\begin{equation*}
\left\{\begin{array}{rcll}
\mathcal{L}^1u(x) &=& f^1(x), & x_1 < x < x_N,\\
\mathcal{L}^2u(x) &=& f^2(x), & x=x_1,\\
\mathcal{L}^3u(x) &=& f^3(x), & x=x_N,\\
\end{array}\right.
\end{equation*}
with a polynomial solution $p(x)$ determined through collocation at the nodes $x_i$, $i=1,\ldots,N$
the residual $r(x)=\mathcal{L}^1p(x)-f(x)$ has the form
\[r(x)=\frac{\prod_{j=2}^{N-1} (x-x_j)}{(N-2)!}r^{(N-2)}(\xi),\]
where $\xi \in (x_1,x_N)$. For equispaced points, $x_{j+1}-x_j=h$, this can 
be estimated by
\[|r(x)|\leq\frac{h^{N-2}}{N-2}\max_{\xi\in (x_1,x_N)}|r^{(N-2)}(\xi)|.\]
\end{theorem}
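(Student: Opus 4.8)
The plan is to mimic, almost verbatim, the classical proof of the polynomial interpolation error formula cited as [GoOr92, pp.\ 43--44], with the residual $r$ playing the role that the interpolation error $e_I$ plays there. The one structural fact I need is precisely where $r$ vanishes. Since $p$ is the polynomial obtained by collocation, the interior equation $\mathcal{L}^1 p(x_j)=f(x_j)$ is enforced at exactly the $N-2$ interior nodes $x_2,\ldots,x_{N-1}$, so $r(x_j)=\mathcal{L}^1 p(x_j)-f(x_j)=0$ for $j=2,\ldots,N-1$. Crucially, at the two endpoints $x_1$ and $x_N$ the collocation enforces the boundary operators $\mathcal{L}^2$ and $\mathcal{L}^3$, not $\mathcal{L}^1$, so $r$ is \emph{not} forced to vanish there. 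This is exactly why the nodal product runs over the $N-2$ interior indices and the derivative order is $N-2$, rather than $N$ as in interpolation; it is the only place where the bookkeeping departs from the textbook proof.

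With this observation the claim reduces to the generalized-Rolle (nodal remainder) identity for a function vanishing at $N-2$ distinct points. First I fix an arbitrary evaluation point $x\in(x_1,x_N)$; if $x$ coincides with an interior node both sides vanish, so assume otherwise. Writing $w(t)=\prod_{j=2}^{N-1}(t-x_j)$, I define the auxiliary function
\[
\Phi(t)=r(t)-\frac{r(x)}{w(x)}\,w(t).
\]
By construction $\Phi$ vanishes at the $N-2$ interior nodes and at $t=x$, giving $N-1$ distinct zeros in $[x_1,x_N]$. Applying Rolle's theorem repeatedly, $\Phi'$ has at least $N-2$ zeros, $\Phi''$ at least $N-3$, and so on, until $\Phi^{(N-2)}$ has at least one zero $\xi\in(x_1,x_N)$. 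Since $w$ is monic of degree $N-2$, we have $w^{(N-2)}\equiv(N-2)!$, while $r^{(N-2)}$ survives, so $\Phi^{(N-2)}(\xi)=r^{(N-2)}(\xi)-\frac{r(x)}{w(x)}(N-2)!=0$. Solving for $r(x)$ yields $r(x)=\frac{w(x)}{(N-2)!}r^{(N-2)}(\xi)$, which is the stated identity once $x$ is relabelled.

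The estimate for equispaced nodes then follows by bounding the nodal polynomial $w(x)=\prod_{j=2}^{N-1}(x-x_j)$ exactly as in the interpolation case, now for the $N-2$ interior points with spacing $h$: the standard equispaced bound gives $\max|w|\le(N-3)!\,h^{N-2}$ on the interval spanned by those nodes, and dividing by $(N-2)!$ produces the factor $h^{N-2}/(N-2)$. The main step requiring care — the analogue of the $C^N$ hypothesis in the interpolation formula — is regularity: the argument needs $r\in C^{N-2}$, which, since $p$ is a polynomial, reduces to assuming $f\in C^{N-2}$ (automatic in our Helmholtz examples, where $f$ is analytic). I expect the only genuine subtlety to be the endpoint behaviour of the product bound: the clean constant $h^{N-2}/(N-2)$ holds on the interior interval $[x_2,x_{N-1}]$, whereas $w$ grows across the two outer subintervals $[x_1,x_2]$ and $[x_{N-1},x_N]$, so a bound uniform over all of $(x_1,x_N)$ carries a larger constant. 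This endpoint bookkeeping, rather than the Rolle argument itself, is where the residual estimate differs from the classical polynomial error proof.
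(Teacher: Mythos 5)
Your proof is correct and follows essentially the same route as the paper's: the paper defines $\Psi(s)=r(s)-\frac{r(x)}{\chi(x)}\chi(s)$ with $\chi$ the product over the interior nodes, counts the $N-1$ zeros (the $N-2$ interior collocation points where $\mathcal{L}^1p-f$ vanishes, plus $x$ itself), and applies Rolle's theorem $N-2$ times — exactly your argument with $\Phi$ and $w$. Your closing observation is also well taken: the paper proves only the exact remainder formula and states the equispaced bound without proof, whereas you correctly note that the standard nodal-product bound delivers the constant $h^{N-2}/(N-2)$ only on $[x_2,x_{N-1}]$, with a larger constant (up to a factor $N-2$) needed on the outer subintervals $[x_1,x_2]$ and $[x_{N-1},x_N]$.
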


\begin{proof}
Let $\Psi(s)=r(s)-\frac{r(x)}{\chi(x)}\chi(s)$, where
$\chi(x)=\prod_{j=2}^{N-2}(x-x_j)$. Then $\Psi(x)=0$ and $\Psi(x_j)=0$,
$j=2,\ldots,N-2$, since $r(x_j)=0$ at all interior node points where
the equation is enforced. This means that $\Psi(s)$ has at least $N-1$
zeros. By repeated application of Rolle's theorem, we find that
$\Psi^{(N-2)}(s)$ has at least one zero. That is,
\[0=\Psi^{(N-2)}(\xi)=r^{(N-2)}(\xi)-\frac{r(x)}{\chi(x)}(N-2)!.\]
Rearranging gives the expression for $r(x)$.
\end{proof}

To see how this can help us in understanding the behavior of the error for small $\ep{}$, we insert the residual estimate in the error estimate~\eqref{eq:errest1} for the one-dimensional Helmholtz problem to get
\begin{equation}
  \|e\|_\infty\leq \frac{1}{2\kappa}\frac{h^{N-2}}{N-2}\|r^{(N-2)}\|_\infty.
\end{equation}
In the flat limit, the residual is 
$r(x)=-p^{\prime\prime}(x)-\kappa^2p(x)$, where $p(x)$ is the limit 
polynomial of degree $N-1$. Then $r^{(N-2)}(x)=-\kappa^2p^{(N-2)}(x)$.  
We know that $p(x)\approx u(x)=\exp(i\kappa x)$, but
even if $p(x)$ is a very good approximation of $u(x)$, $p^{(N-2)}(x)$ 
(which is a line) is a rather poor approximation of $u^{(N-2)}(x)$.
However, numerical tests indicate that the order of magnitude is right.
That is, 
$|p^{(N-2)}|\approx |\frac{d^{N-2}\exp(i\kappa x)}{dx^{N-2}}|=\kappa^{N-2}$.
We cannot use this as a bound, but we get an approximate expression for the error in the limit
\begin{equation}
  \|e\|_\infty\approx \frac{1}{2\kappa}\frac{h^{N-2}}{N-2}\kappa^2\kappa^{N-2}
  =\frac{\kappa(\kappa h)^{N-2}}{2(N-2)}\approx \frac{1}{2}(\kappa h)^{N-1}.
\end{equation}
Note that the quantity $\kappa h$ is small only if the problem is 
adequately resolved.

For the two-dimensional problem, the limit polynomial has degree $K$ if $N_{K-1,d}<N\leq N_{K,d}$ and it is zero at the interior node points.
To get an estimate for the residual in terms of its derivatives, we could potentially use a sampling inequality such as~\cite[Theorem 3.5]{Madych06}, which says that for all $h\leq h_0$
\begin{equation}
  \|r\|_\infty \leq C_kh^{k}\sum_{|\sigma|=k}\|D^\sigma r\|_\infty,
  \label{eq:sampl}
\end{equation}
where $h_0$ depends on the geometry of $\Omega$. For the unit square, which we are using here, $h_0=\frac{1}{2kc_2}$ with $c_2=12$. In the discretizations that we use $h=1/(\sqrt{N}-1)$. Requiring $h\leq h_0$ leads to the following condition $k\leq (\sqrt{N}-1)/24$. We want to use the theorem for $k=K-1$, where inverting the expression for $N_{K,d}$ yields that $K=\lfloor\sqrt{2}\sqrt{N+1/8}-1.5\rfloor$. That is, the theorem does not hold in this case. 
From practical experience, the result holds also for larger $k$ (larger $h$), and we will therefore use it to approximate the residual.


In this case, using that $r(\underline{x})=-\Delta p(\underline{x})-\kappa^2p(\underline{x})$, and, for $|\sigma|=K-1$, $D^\sigma r(\underline{x})=-\kappa^2D^\sigma p(\underline{x})\approx -\kappa^2D^\sigma u(\underline{x})$,  we get
\[\|r\|_\infty\leq C_{K-1}h^{K-1}\kappa^2\sum_{|\sigma|=K-1}
\beta_1^{\sigma_2}\alpha_1^{\sigma_1}\leq C_{K-1}\kappa^2K(\kappa h)^{K-1},\]
%
Combining the approximate expression for the residual with the error estimate~\eqref{eq:err2} restricted to the first mode (scaled by $1/\sqrt{2}$) gives
\[\|e\|_\infty\approx\frac{1}{2|\beta_1|}C_{K-1}\kappa^2K(\kappa h)^{K-1}\]
Numerical experiments show that $KC_{K-1}=C/(K-1)$, provides the appropriate behavior with respect to $N$ (both $K$ and $h$ are coupled with $N$). This leads to

\[\|e\|_\infty\approx\frac{C\kappa^2(\kappa h)^{K-1}}{{2}|\beta_1|(K-1)}\approx \tilde{C}(\kappa h)^K,\]
where the final expression is just to show that the dimension is similar to that of the one-dimensional error approximation.

       
 
Figure~\ref{fig:smallep} shows the computed errors of the one-dimensional and two-dimensional problems for small values of the shape parameter. The error behavior agrees well with the derived error approximations. For the two-dimensional problem, we also show that the error expression can be multiplied by a constant to get a very good fit to the actual error. This means that we can use $\|e\|_\infty\approx C(\kappa h)^K$ \textit{a priori} with $C=1$ to determine the necessary resolution for a given tolerance. Given at least two numerical solutions, we can also estimate the constant $C$. The error approximation is most likely to be valid for problems that are almost rectangular or with mildly varying coefficients, but only for small shape parameter values. 

\begin{figure}
\centering
\includegraphics[width=0.47\textwidth]{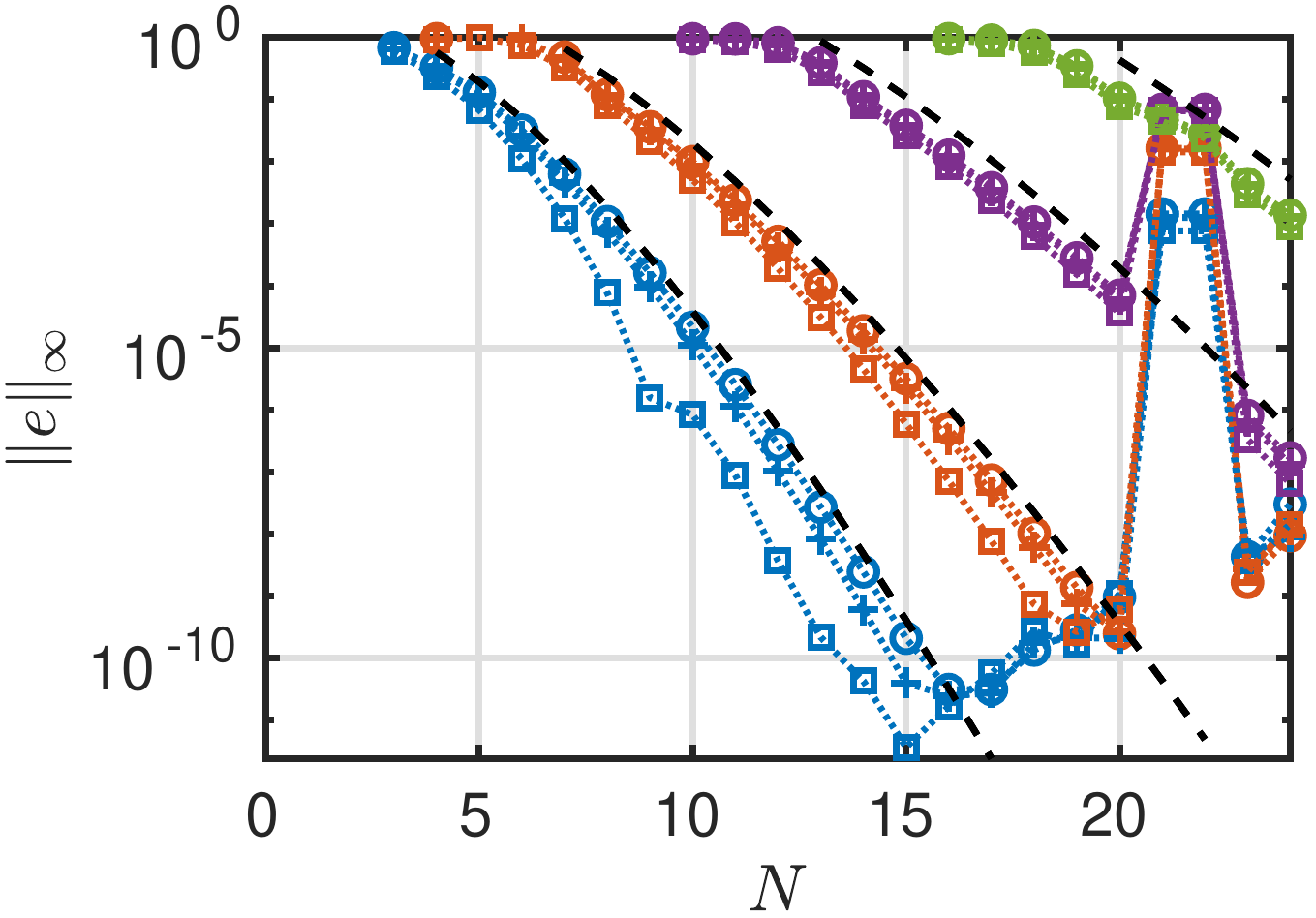}
\includegraphics[width=0.47\textwidth]{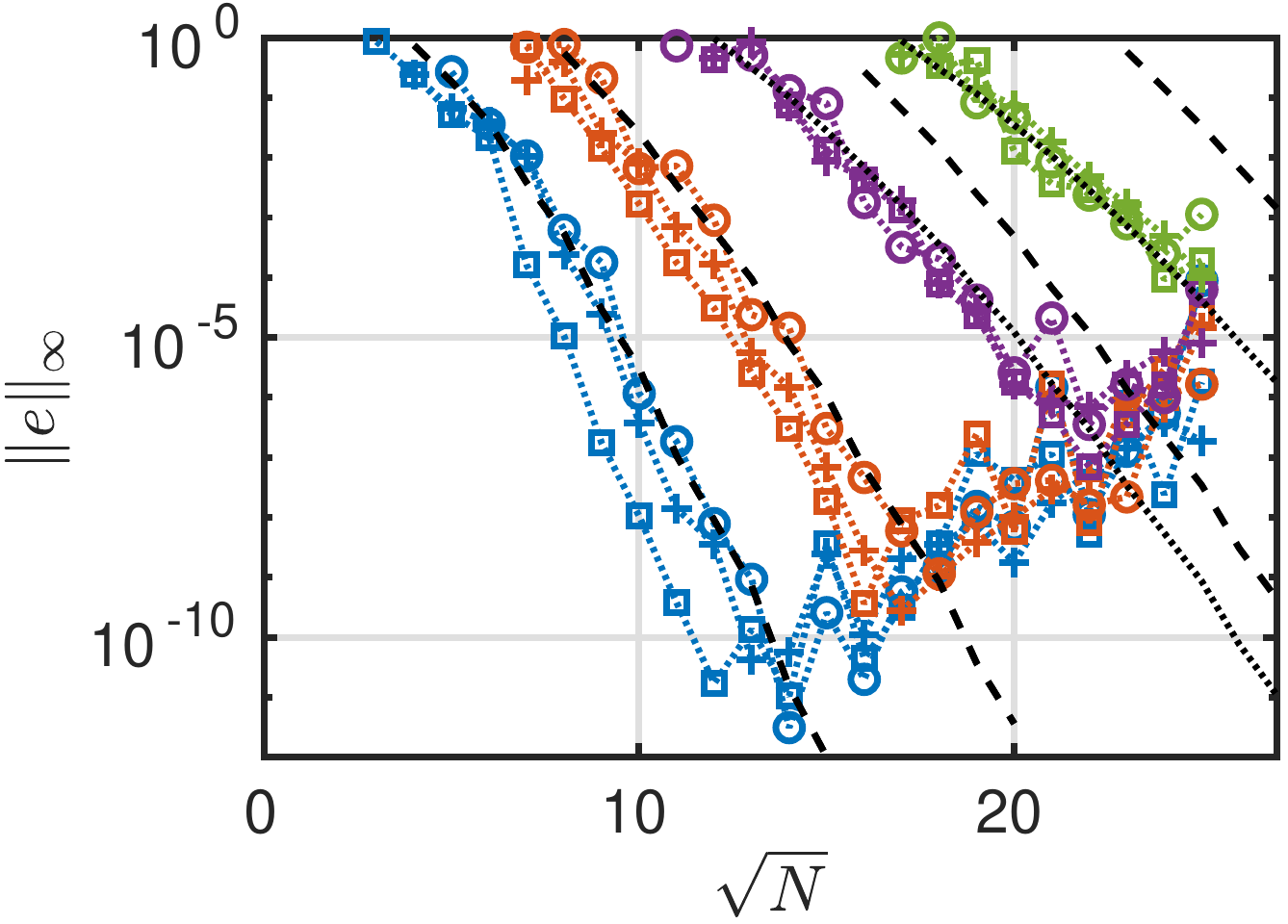}
\caption{The computed errors using the Gaussian RBF and the RBF-QR method for $\ep{}=0.5$ (\protect\scalebox{0.8}{$\Box$}), $\ep{}=0.25$ ($+$), and $\ep{}=0.01$ ($\circ$) for $\kappa=\pi,\,2\pi,\,4\pi,\,6\pi$, from
left to right, together with the approximation $\|e\|_\infty\approx \frac{1}{2}(\kappa h)^{N-1}$ for the one-dimensional problem (left), and for $\kappa=1.2\pi,\,2.4\pi,\,4.8\pi,\,7.2\pi$, from
left to right, together with the approximation $\|e\|_\infty\approx (\kappa h)^{K}$ for the two-dimensional problem (right) (dashed curves). For $\kappa=4.8\pi$ and $7.2\pi$ in the two-dimensional case, we also show the error approximation using $C=1/40$ and $C=1/800$, respectively (dotted lines).}
\label{fig:smallep}
\end{figure}

\newpage
\subsection{Convergence properties for larger $\ep{}$}
As shown in~\cite{Schaback07,Schaback16,LaShchHe17}, the convergence of a PDE approximation can be expressed in terms of the approximation properties of the interpolant (consistency error) and a stability term. The consistency error of the PDE operator can for example be expressed as
\[\mathcal{E}_\mathcal{L}=\mathcal{L}(I_h(u)-u),\]
where $I_h(u)$ interpolates $u$ using a node set with fill distance $h$. Several authors have derived exponentially converging error results for RBF interpolation~\cite{Powell91,MadNel92,Madych92,BuhDyn93,WuScha93,RieZwi10}. The first papers are focused on interpolation errors, while~\cite{RieZwi10} also provides estimates for derivatives of functions with many zeros, such as the interpolation error. We use the optimality property of RBF interpolants in the native space (reproducing kernel Hilbert space)~\cite{Fass07}
\[\|I_h(u)\|_{\mathcal{N}(\Omega)}\leq \|u\|_{\mathcal{N}(\Omega)},\]
to replace the interpolation error norm with the function norm, since $\|I_h(u)-u\|_{\mathcal{N}(\Omega)}=\|\mathcal{E_I}\|_{\mathcal{N}(\Omega)}\leq 2\|u\|_{\mathcal{N}(\Omega)}$.
We get the following estimates for RBF interpolants in compact cube domains using~\cite[Corollary 5.1]{RieZwi10} for Gaussians
\[\|\mathcal{E_I}\|_\infty\leq e^{C_G\log(h)/h}\|\mathcal{E_I}\|_{\mathcal{N_G}(\Omega)}\leq
2e^{C_G\log(h)/h}\|u\|_{\mathcal{N_G}(\Omega)},\]
where $C_G>0$, and for inverse multiquadrics
\[\|\mathcal{E_I}\|_\infty\leq e^{-C_{Q}/h}\|\mathcal{E_I}\|_{\mathcal{N_Q}(\Omega)}\leq 2e^{-C_{Q}/h}\|u\|_{\mathcal{N_Q}(\Omega)},\]
when $h\leq h_0$, and with $C_Q>0$. This is the same $h_0$ as in the sampling inequality~\eqref{eq:sampl}, which means that the condition is restrictive.
The constants $C_{G}$ and $C_{Q}$ depend on the number of dimensions $d$ and properties of the domain $\Omega$.

The results for derivatives of the interpolation error are given for Lipschitz domains, which are more general than compact cubes, but the results are instead weaker in terms of the convergence rate. From~\cite[Theorem 3.5]{RieZwi10}, we get
\[\|\mathcal{E_L}\|_\infty\leq 2e^{\tilde{C}_{G}\log(h)/\sqrt{h}}\|u\|_{\mathcal{N_G}(\Omega)},\]
\[\|\mathcal{E_L}\|_\infty\leq 2e^{-\tilde{C}_{Q}/\sqrt{h}}\|u\|_{\mathcal{N_Q}(\Omega)},\]
for Gaussians and inverse multiquadrics respectively. The higher rate of Gaussian RBFs is related to the behavior of embedding constants for the native space in relation to Sobolev spaces of increasing order. The constants $\tilde{C}_{G}$ and $\tilde{C}_{Q}$ depend on properties of the domain $\Omega$, and $\tilde{C}_{Q}$ also depends on $\mathcal{L}$ and $d$. In~\cite{RieZwi14}, it is shown that the better convergence rates are obtained also for derivatives of the interpolation error if the nodes are clustered in a layer close to the boundary.

In order to investigate numerically what the actual behavior of the error is for the Helmholtz problem, we solve the one-dimensional problem for a range of shape parameter values and different numbers of node points. In this test, we have used multiquadric RBFs. We assume that the error for multiquadric RBFs has the form
\[\|e\|_\infty=A_M\exp(-C_Mf(h)),\]
where $C_M>0$, $f(h)=1/h$ or $f(h)=1/\sqrt{h}$, and the native space norm has been absorbed into the constant. Then a plot of the logarithm of the error against $f(h)$ should result in a straight line. From Figure~\ref{fig:expconv}, it is clear that $f(h)=1/h$ is a better fit. The dashed lines correspond to a fit of the model with $f(h)=1/h$ to the actual errors, where the results suffering from ill-conditioning effects have been ignored.
\begin{figure}[!htb]
\centering
\includegraphics[width=0.47\textwidth]{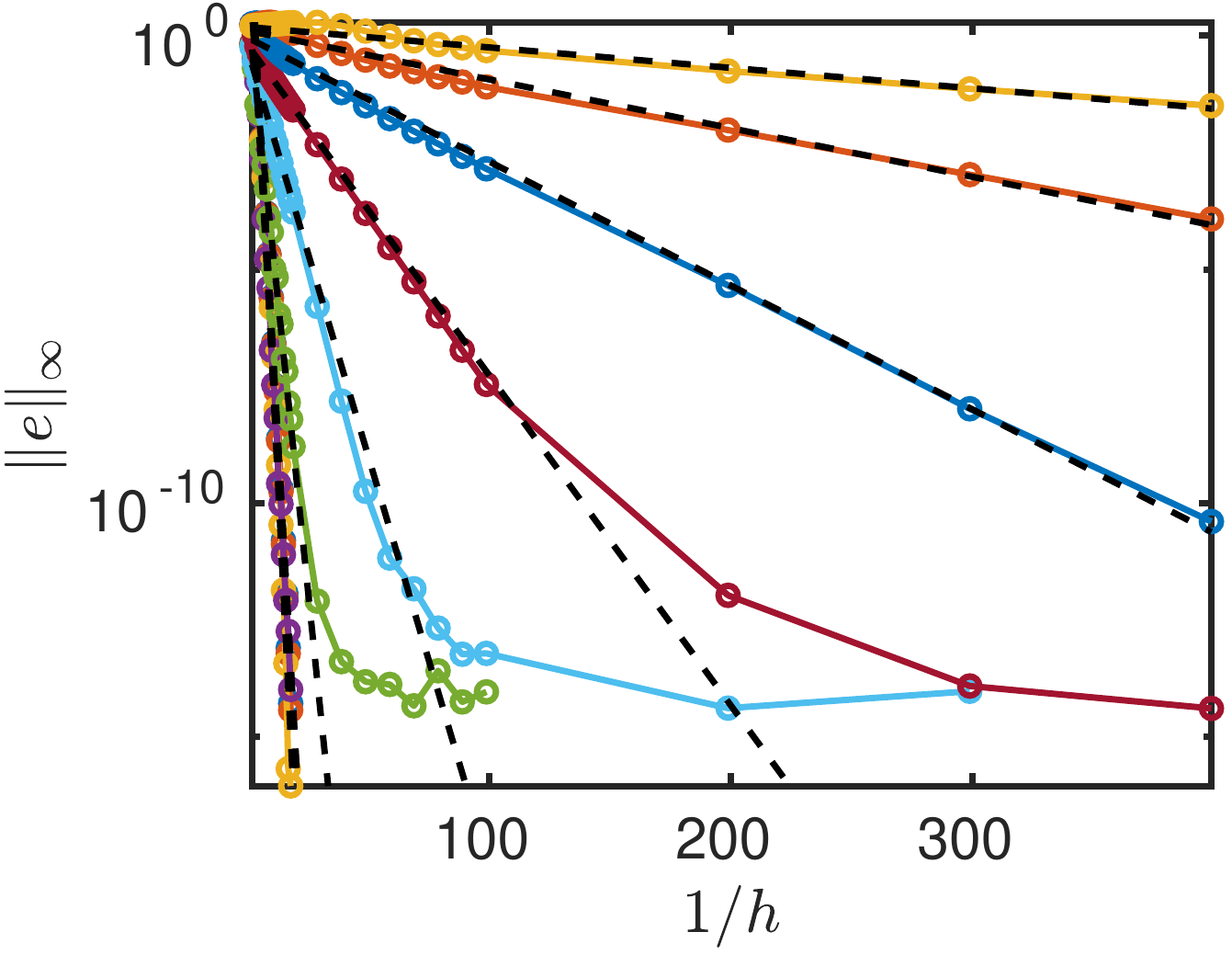}
\includegraphics[width=0.47\textwidth]{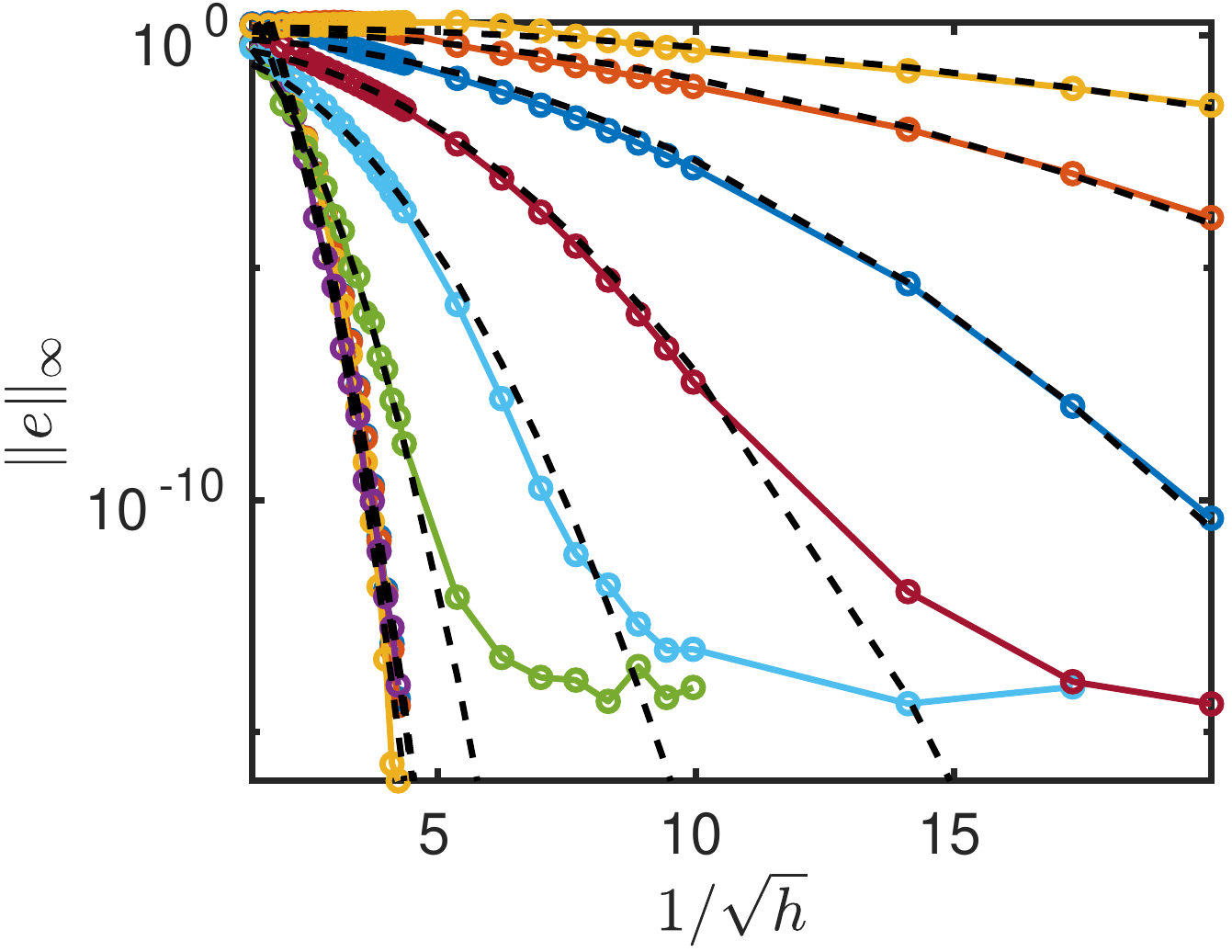}
\caption{The error in the one-dimensional Helmholtz solution when multiquadric RBFs are used as a function of $1/h$ (left) and $1/\sqrt{h}$ (right) for shape parameters $\ep{}=10^{-2+\frac{4}{9}q}$, $q=1,\ldots,9$ (left to right). The dashed black lines/curves correspond to a fit of $\|e\|_\infty=A_M\exp(-C_M/h)$ to the error data (in both cases).}
\label{fig:expconv}
\end{figure}

Figure~\ref{fig:consterr} shows the fitted model parameters $A_M$ and $C_M$ for different shape parameter values. The different curves correspond to different wavenumbers, and it should be noted that the exponential rate $C_M$ becomes independent of the wave number when $\ep{}\gtrapprox0.5$. The rate also decreases with increasing shape parameter values. The optimal rate is attained for a small positive shape parameter value, and for even smaller shape parameters, the asymptotic (polynomial) rate is dominating. The coefficient $A_M$ instead seems to be largest where the rate is optimal, and smallest where the rate is lowest, which makes it harder to determine the best shape parameter value. We discuss this further in the following subsection.
\begin{figure}[!htb]
\centering
\includegraphics[width=0.47\textwidth]{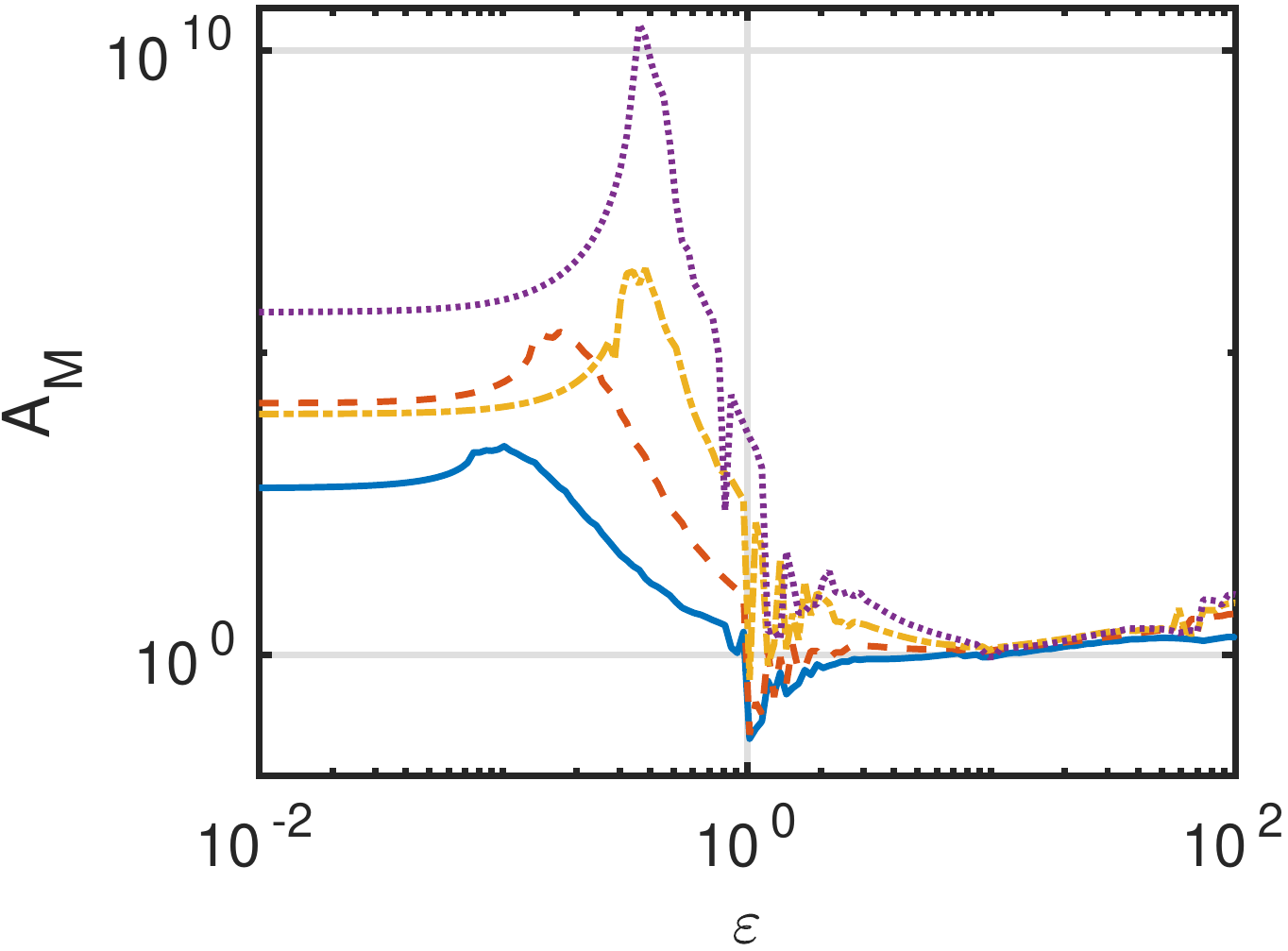}
\includegraphics[width=0.47\textwidth]{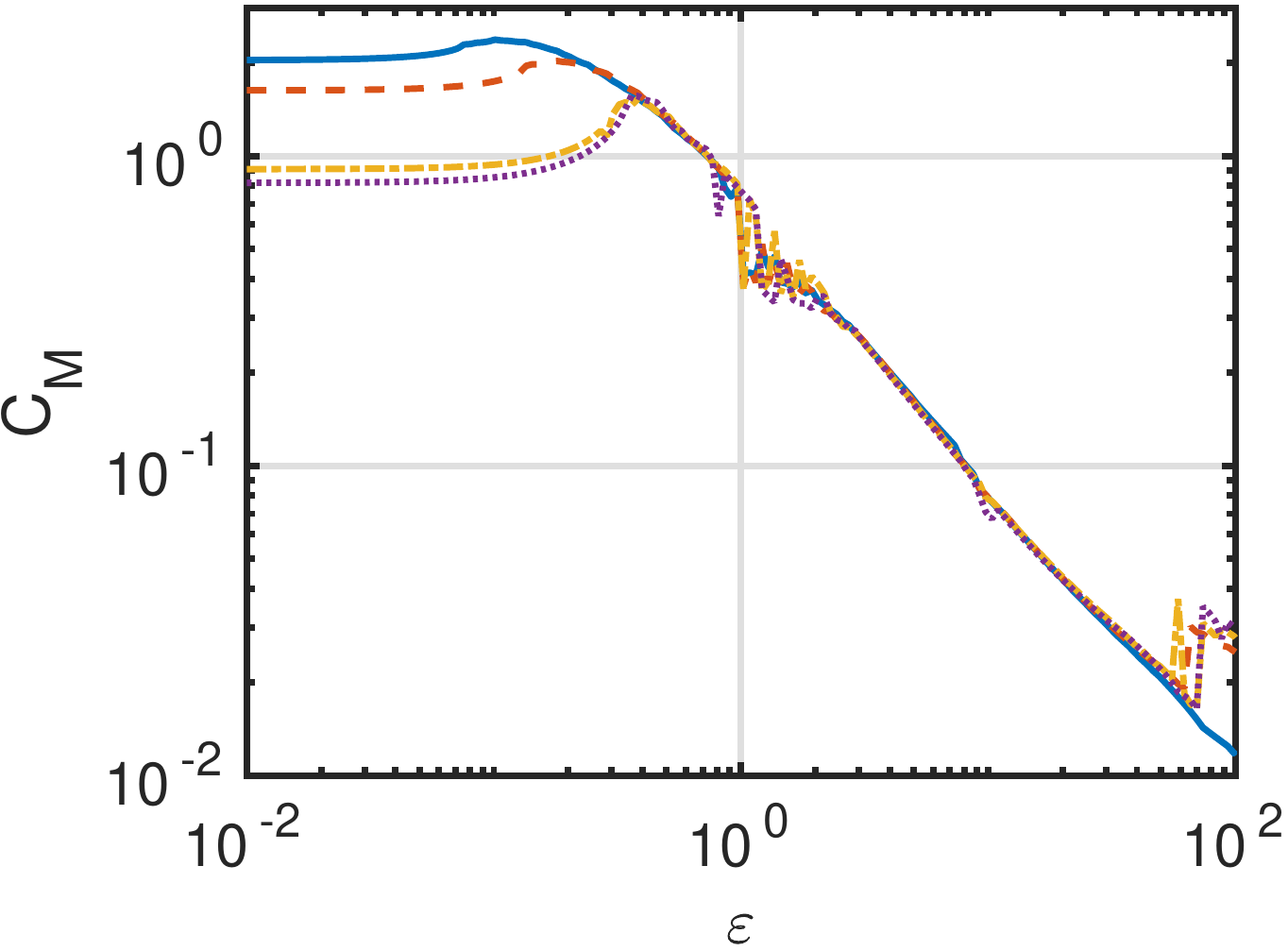}
\caption{The result of fitting the model parameters $A_M$ and $C_M$ to the computed errors for the one-dimensional Helmholtz problem using multiquadric RBFs and different values of the shape parameter $\ep{}$, and for $\kappa=\pi$ (solid line), $\kappa=2\pi$ (dashed line), $\kappa=4\pi$ (dash-dot line), and $\kappa=6\pi$ (dotted line).}
\label{fig:consterr}
\end{figure}

\subsection{Convergence as a function of the shape parameter}
\label{sec:epdep}
Dependence on the shape parameter is not discussed in~\cite{RieZwi10}, and the results reported in the previous subsection hold for a fixed value of $\ep{}$. However, using a shape parameter $\ep{}_0\neq 1$ for an interpolation problem defined in the domain $\Omega$ with fill distance $h$ is equivalent to using a shape parameter $\ep{}_1=1$ for a problem in the scaled domain $\ep{}_0\Omega$ with fill distance $\ep{}_0h$. This can be understood by noting that $\phi(\ep{}_0\|\underline{x}_i-\underline{x}_j\|)=\phi(1\cdot\|\ep{}_0\underline{x}_i-\ep{}_0\underline{x}_j\|)$. Hence, the native space norm is the same in both cases, and the errors are the same in both cases.

If we let the constants $A_M$ and $C_M$ in the error estimate for a specific domain $\Omega$ and shape parameter $\ep{}$ be denoted by $A_M(\Omega,\ep{})$ and $C_M(\Omega,\ep{})$, this means that
\begin{equation}
  A_M(\Omega,\ep{})e^{-C_M(\Omega,\ep{})/h}=A_M(\ep{}\Omega,1)e^{-C_M(\ep{}\Omega,1)/(\ep{}h)}.
  \label{eq:eph}
\end{equation}
That is, the convergence rate for a fixed value of $\ep{}$ is increasing for smaller values of $\ep{}$. This can also be seen in Figure~\ref{fig:consterr}, where the slope in the logarithmic plot of $C_M$ against $\ep{}$ is approximately equal to $-1$. It should be stressed that this does not hold in the flat limit regime, only for $\ep{}\gtrapprox0.5$ (in our case). This also corresponds to the theoretical result given in~\cite{Madych92}, where an explicit constraint on the smallest shape parameter for which the results hold is given as $\ep{}\geq 1/D$, where for a cube domain, $D$ is the side. This coincides well with the numerical results. However, there is also an upper bound $\ep{}\leq 1$, which is harder to reconciliate with what we observe.  


Figures~\ref{fig:errors} and~\ref{fig:errors2} show the error as a function of $\ep{}$ for two one-dimensional problems, and one two-dimensional problem, respectively. The error curves represent a common behavior for smooth solution functions. Starting from a large shape parameter and moving towards smaller values, the error first decreases rapidly then reaches an optimal region, and finally levels out at the polynomial approximation error, see~\cite{LaFo05} for a more detailed discussion about the error curve and the optimal shape parameter.







Due to the conditioning problems for decreasing values of $\ep{}$ and increasing values of $N$, a common approach in the literature is to scale the shape parameter such that, e.g., $\ep{}{h}=C$, which is called stationary interpolation. A problem is that stationary interpolation does not converge as $h$ goes to zero. This can be understood by again looking at analogous problems. I we start from a problem on the domain $\Omega$ with shape parameter $\ep{}$ and fill distance $h$, and we refine to get fill distance $h/q$ and shape parameter $q\ep{}$, then the equivalent problem is $(q\Omega,\ep{},h)$. That is, the refinement corresponds to stretching out the domain, while keeping the fill distance and shape parameter constant. This makes the apparent solution function become increasingly smooth, and approaching a constant. Since constants are only reproduced for $\ep{}=0$ for the commonly used infinitely smooth RBFs, there is no convergence for a fixed non-zero $\ep{}$. By augmenting the RBF approximation with polynomial terms, convergence corresponding to the polynomial order can be recovered also in the stationary case~\cite{FFBB16}. 

The convergence curves when choosing the shape parameter as $\ep{}=Ch^\beta$ for different exponents $\beta$ are also shown as dashed lines in Figures~\ref{fig:errors} and~\ref{fig:errors2}. As expected, the stationary choice, $\beta=-1$ levels out as $N$ increases. For $\beta>-1$ we get convergence along different paths. The choice $\beta=0$ corresponds to the exponential convergence case for fixed shape parameter values. For these Helmholtz problems, the curve with $\ep{}=Ch^{3/2}$ captures the optimal shape parameter values well. For other types of problems, the relation would look different.
%
\begin{figure}[!htb]
\centering
\includegraphics[width=0.47\textwidth]{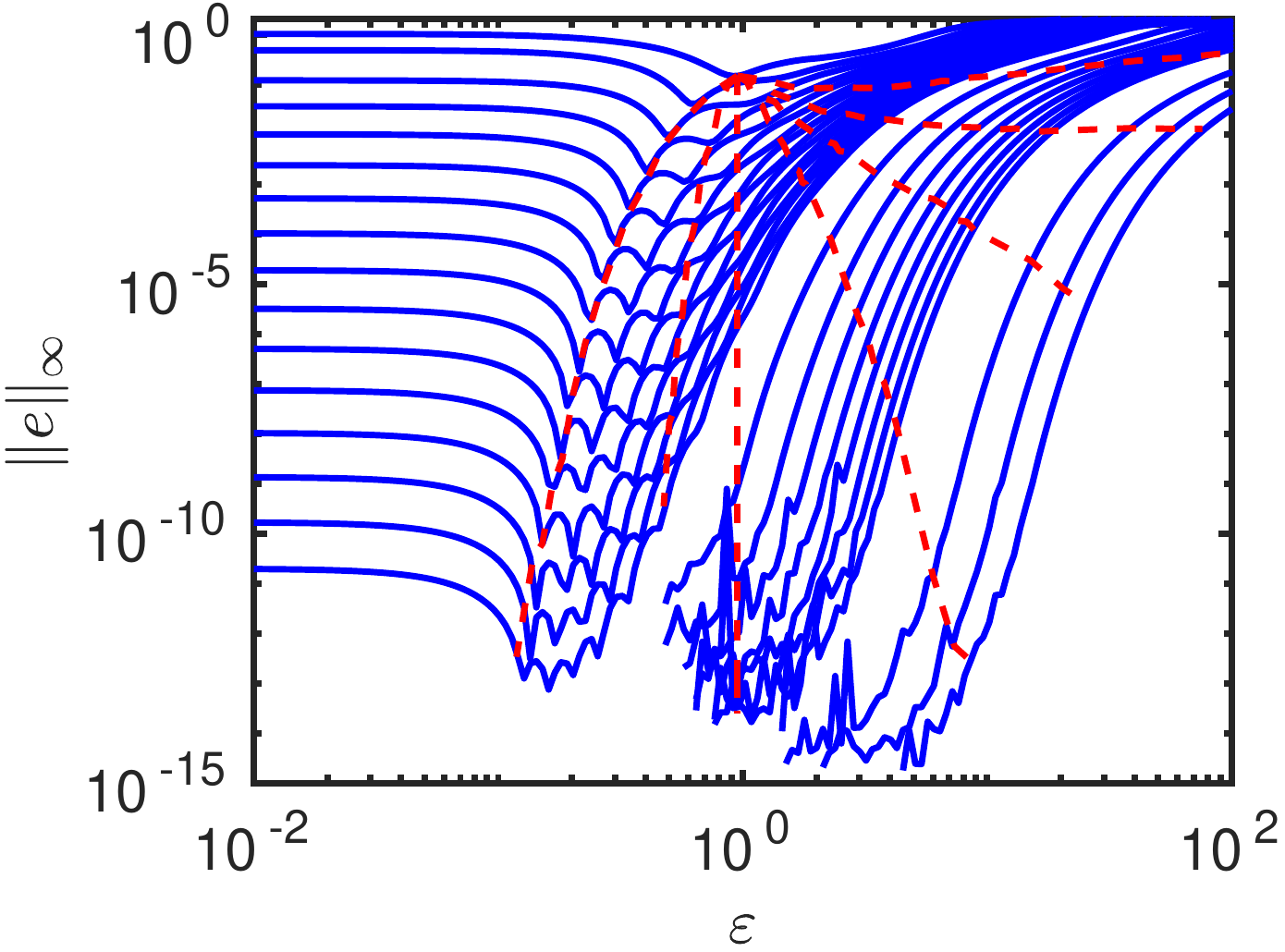}
\includegraphics[width=0.47\textwidth]{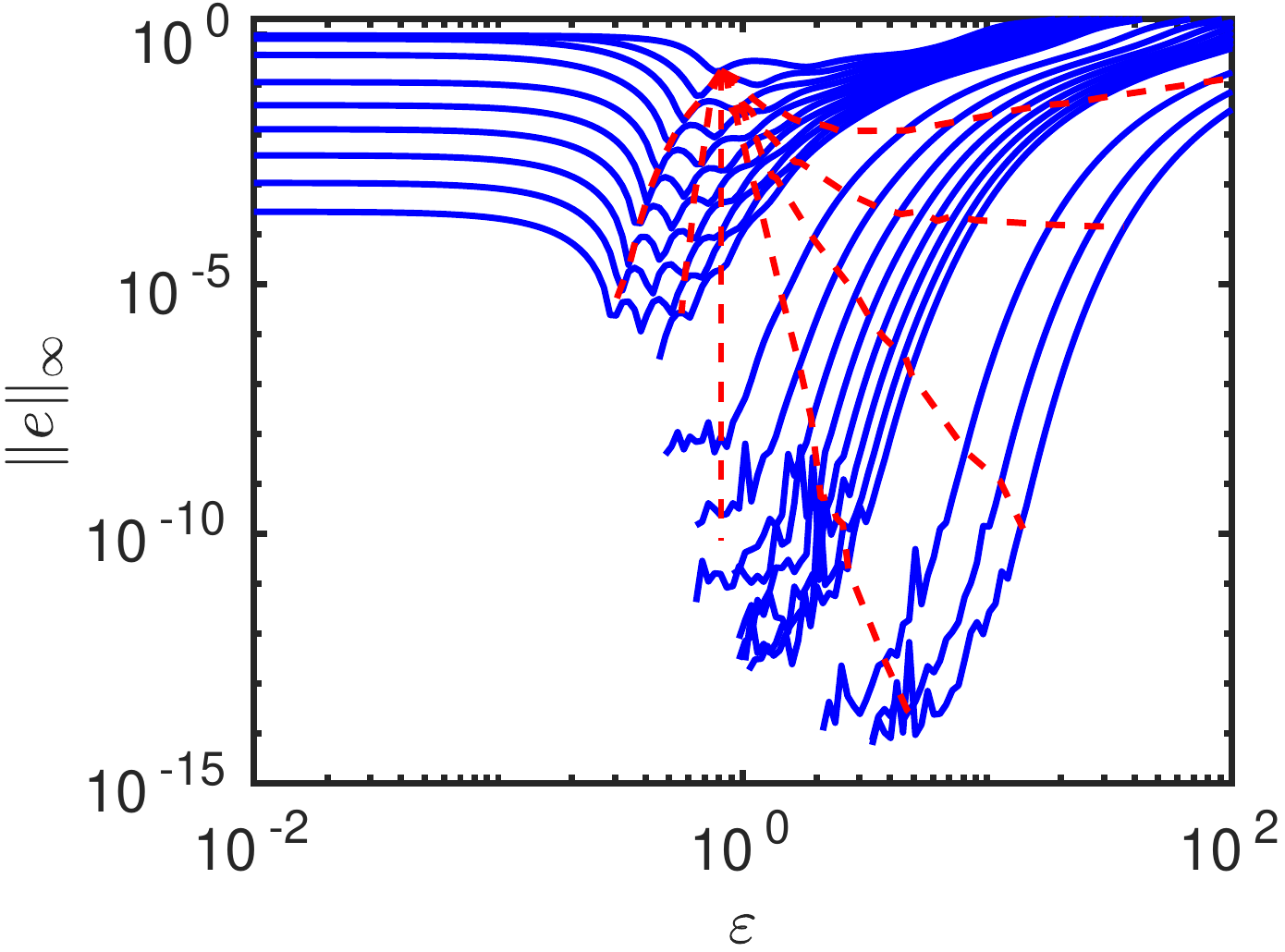}
\caption{The maximum error as a function of $\ep{}$ for $\kappa=2\pi$ (left)
and $\kappa=4\pi$ (right) using multiquadric RBFs. The number of node points is from top to bottom $N=6,\,7,\ldots,21,\,30,\,40,\ldots,100,\,200,\,300,\,400$ in the left subfigure, and
$N=10,\,11,\ldots,20,\,30,\ldots,100,\,200,\,300,\,400$ in the right subfigure. The dashed lines show how the error curves are traversed if the shape parameter is chosen as 
$\ep{}=Ch^\beta$, with $\beta=\frac{3}{2},\frac{1}{2},0,-\frac{1}{2},-\frac{3}{4},-1,-\frac{3}{2}$ from left to right.}
\label{fig:errors}
\end{figure}

For the two-dimensional problem, the curves are more irregular due to several interacting terms in the error~\cite{LaFo05}. However, the overall behavior for the different ways to choose the shape parameter is very similar to the one-dimensional case.

%
%

\begin{figure}
\centering
\includegraphics[width=0.5\textwidth]{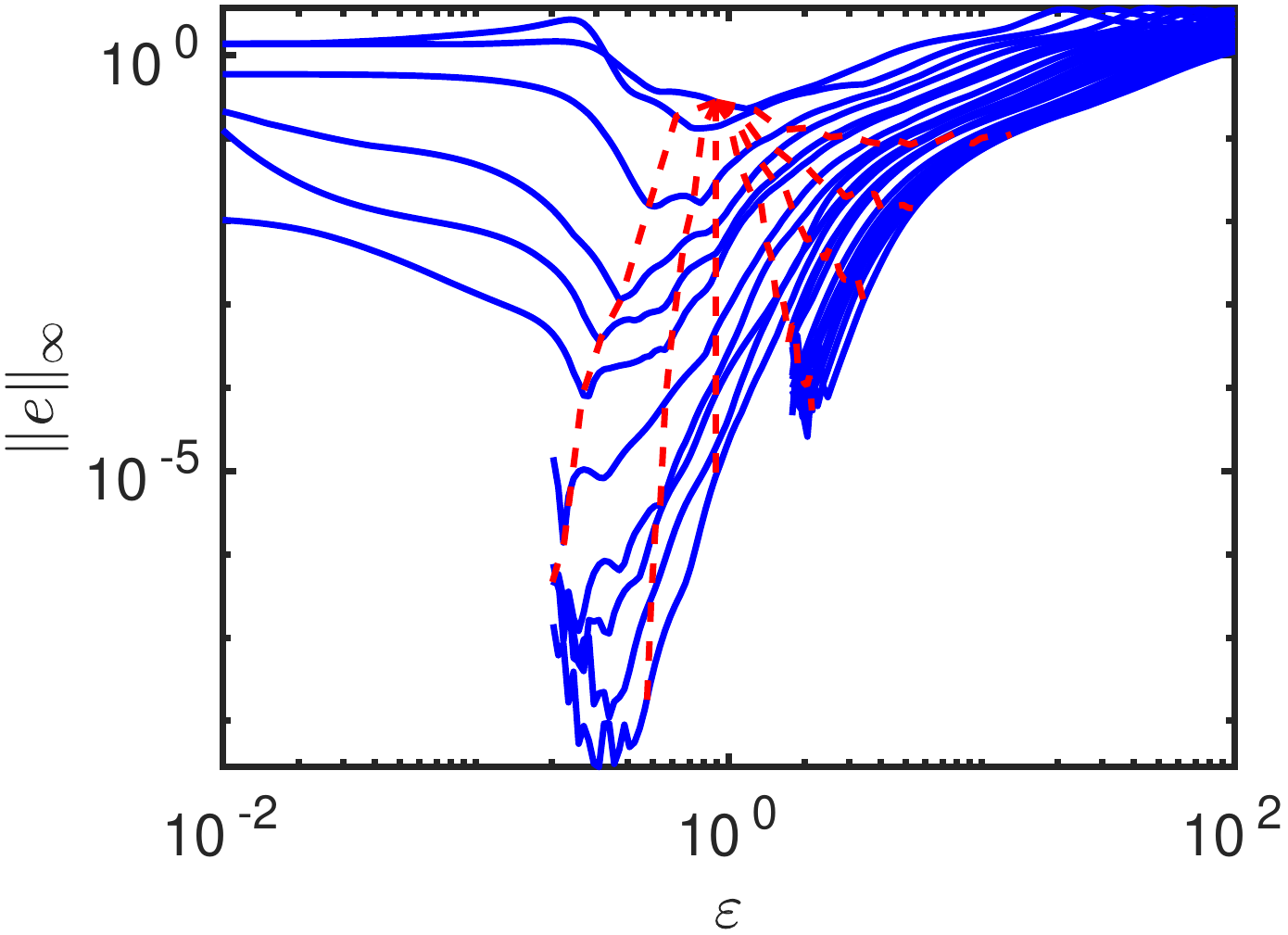}
\caption{The maximum error as a function of $\ep{}$ for $\kappa=2.2\pi$ for the two-dimensional problem using multiquadric RBFs. The number of node points is from top to bottom
  $N\approx n^2$, for $n=3,\ldots,25$.
  The dashed lines show how the error curves are traversed if the shape parameter is chosen as 
$\ep{}=Ch^\beta$, with $\beta=\frac{3}{2},\frac{1}{2},0,-\frac{1}{2},-\frac{3}{4},-1,-\frac{3}{2}$ from left to right.}
\label{fig:errors2}
\end{figure}
Assuming that $C_M(\ep{}\Omega,1)$ in~\eqref{eq:eph} does not vary strongly with $\ep{}$, something that can be verified by noting that the slope the line in Figure~\ref{fig:consterr} for $C_M$ is approximately equal to $-1$, we can finally provide a convergence rate for the scaled $\ep{}$ convergence case. If we have exponential convergence as $1/\ep{}h$ and $\ep{}=Ch^\beta$ we end up with
\begin{equation}
  \|e\|_\infty= A_M^{\ep{}} e^{-C_M^{\ep{}}/h^{\beta+1}},\quad -1<\beta\leq 0,
  \label{eq:ephscale}
\end{equation}
where $C_M^{\ep{}}>0$ and the superscript indicates the potential $\ep{}$-dependence. If $\beta>0$, the convergence curves may enter the polynomial region, and we cannot in general get increasing convergence rates for increasing $\beta$. 
The validity of this is expression is further investigated numerically in Section~\ref{sec:exp}.

\section{Numerical experiments}\label{sec:exp}
In this section, we focus on the third test problem with curved boundaries, see Figure~\ref{fig:1}. We look at how to choose the method parameters and how we can use the theoretical estimates to interpret the results. Unless otherwise mentioned, the problem parameters are given by wavenumber $\kappa=6\pi$, source location $x_s=0.3$, and boundary curves
\begin{eqnarray*}
  \gamma_1&=&0.3\exp(-20(x_2-0.5)^2),\\
  \gamma_2&=&0.8 - 0.3\left(\exp(-80(x_2-0.3)^2) + \exp(-80(x_2-0.7)^2)\right).
\end{eqnarray*}  

For global RBF approximations and shape parameters that are not in the flat limit a uniform node spacing is in general recommended~\cite{PlaDri05}. However, when the problem size is large enough, there can instead be problems at the boundaries unless the nodes are clustered towards the boundaries~\cite{PlaTreKu11,FoLaFly11}. In our experiments, we do not reach the regime where this is an issue. Therefore, we use quasi uniform nodes. The nodes are constructed from the input parameters $n_1$ and $n_2$, that specify the number of nodes in the vertical direction at the left boundary, and the number of nodes in the horizontal direction. We define the step sizes $h_1=0.8/(n_1-1)$ and $h_2=1/(n_2-1)$. Based on these step sizes, the nodes are then placed uniformly along vertical lines with as similar node distance as possible. The nodes at the top and bottom boundaries are placed with uniform arc length. If the nodes are too regular, they are not unisolvent, and the conditioning gets higher at least for shape parameters that are small~\cite{LaFo05}. Therefore, we add a random perturbation to each node. In all experiments performed here, the size of the random perturbation is $0.25(h_1,h_2)$ for the interior nodes, while boundary nodes are only perturbed along the boundary. The solution, residual, and errors are evaluated on a grid. An example of both nodes and evaluation grid is given in Figure~\ref{fig:nodes}.
\begin{figure}
  \includegraphics[width=0.47\textwidth]{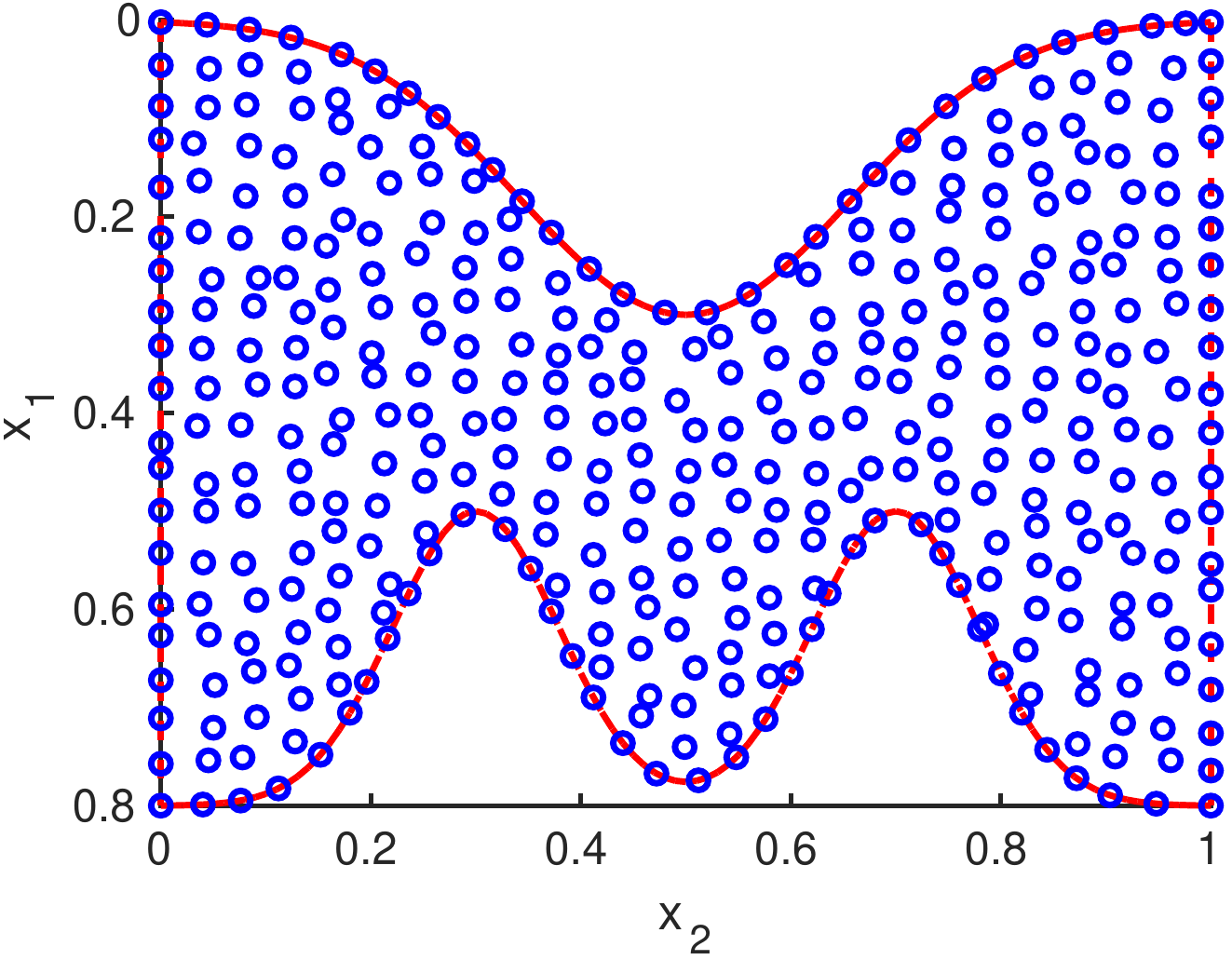}
  \includegraphics[width=0.47\textwidth]{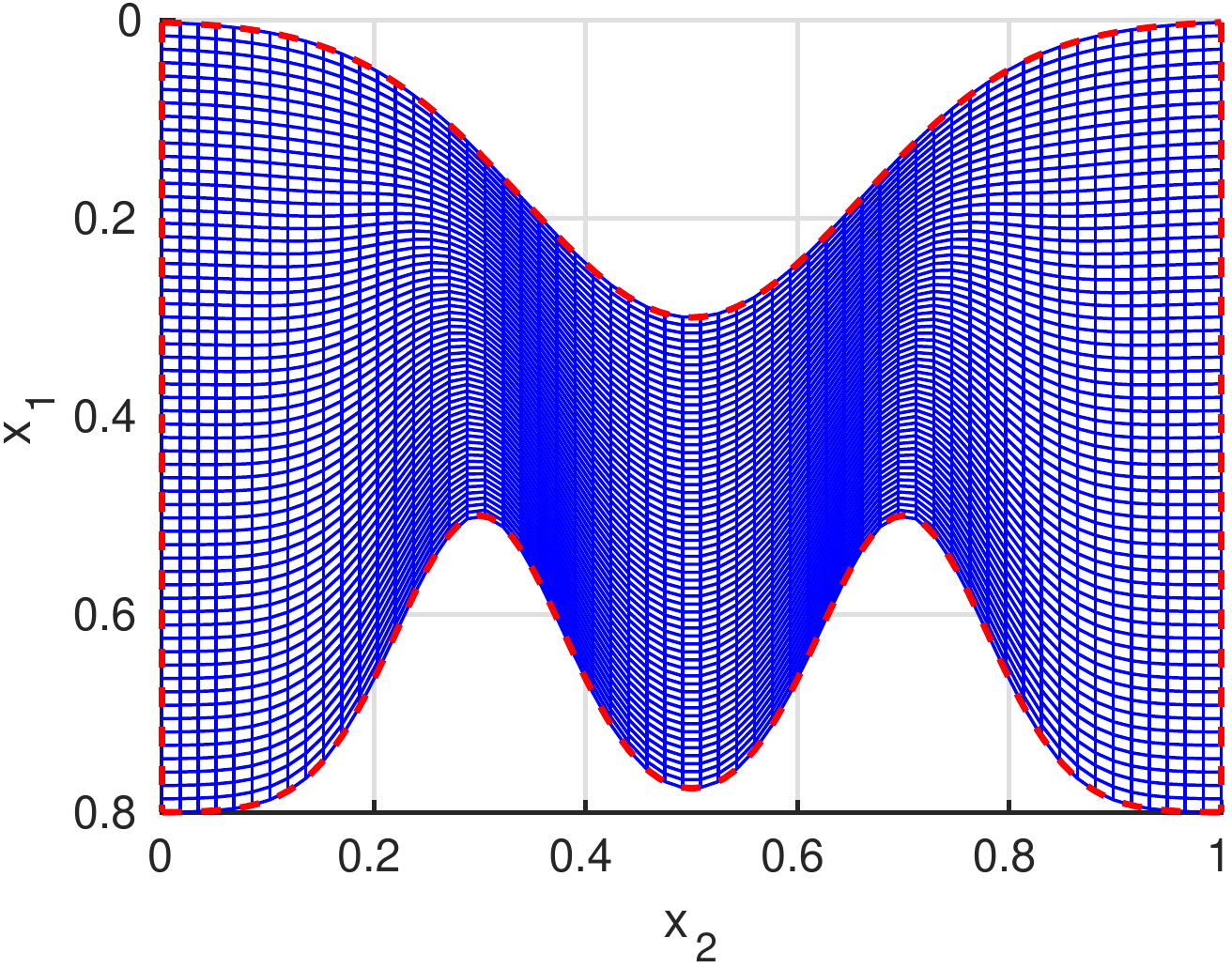}
  \caption{Node points with $n_1=20$ and $n_2=25$ (left) and the evaluation grid with $60\times 60$ points used for the convergence experiments (right).}
  \label{fig:nodes}
\end{figure}  
The resulting numbers of node points for the grids we have used in the experiments are shown in Table~\ref{tab:nodes}.
\begin{table}
  \centering
  \caption{The size $N$ of the different node sets that are used in the experiments. The parameters $n_1$ and $n_2$ are chosen to make $h_1$ and $h_2$ as equal as possible.}
  \label{tab:nodes}
  \small
\begin{tabular}{|c|ccccccc|}\hline
  $n_1\times n_2$ & $10\times12$ &  $11\times14$ &  $12\times15$ &  $13\times16$ &  $14\times17$ &  $15\times19$ &  $16\times20$\\\hline
  $N$            & 104 & 131 & 152 & 174 & 194 & 235 & 261\\\hline
  \multicolumn{8}{c}{\hspace{0mm} }\\\hline  
  $n_1\times n_2$ & $17\times21$ &  $18\times22$ &  $19\times24$ &  $20\times25$ &  $22\times27$ &  $24\times30$ &  $26\times32$\\\hline
  $N$            & 287 & 317 & 362 & 396 & 462 & 563 & 639\\\hline
  \multicolumn{8}{c}{\hspace{0mm} }\\\hline  
  $n_1\times n_2$ & $28\times35$ &  $30\times37$ &  $32\times40$ &  $34\times42$ &  $36\times45$ &  $38\times47$ &  $40\times50$\\\hline
    $N$            & 747 & 844 & 971 & 1079 & 1219 & 1341 & 1493\\\hline
  \multicolumn{8}{c}{\hspace{0mm} }\\\hline  
  $n_1\times n_2$ & $50\times62$ &  $60\times75$ &  $70\times87$ &  $80\times100$ &  $90\times112$ &  $100\times125$ &  \\\hline
  $N$            & 2294 & 3306 & 4434 & 5813 & 7300 & 9029 & \\\hline
\end{tabular}  
\end{table}

Errors are measured against a reference solution computed using the largest node set with $n_1\times n_2=100\times125$. This is the largest problem size that fits in the memory of the Dell Latitude E6230 laptop with an i5-3360 dual core CPU running at 2.8 GHz that was used for the experiments. When we refer to the maximum norm of the numerical errors or the solution, we evaluate them on the $60\times 60$ evaluation grid, except for the solutions with higher wavenumbers, where we use $100\times 100$ grid points. We use multiquadric RBFs in all numerical experiments. The MATLAB implementations of the solvers that were used in the experiments are available at the first authors software page.



%
%

\subsection{Selecting a tolerance for constructing the DtN boundary conditions}
As mentioned in Section~\ref{sec:non-symm}, we need to compute $N$ inner products with each vertical eigenmode $\psi_m$ present in the problem at the two vertical boundaries. Accurate numerical computation of these integrals is a significant computational cost, e.g, up to $n_f=1700$ function evaluations per integral are needed for tolerance $1e-15$. The question is which tolerance to choose.

The sensitivity of the problem (ill-conditioning) depends strongly on the shape parameter $\ep{}$ with an exponentially increasing condition number as the shape parameter goes to zero. By using a stable evaluation method such as RBF-QR for Gaussian RBFs, the sensitivity is removed and the tolerance for the integrals does not need to be smaller than the desired error in the solution. However, for the test problem considered here, too small values of $\ep{}$, leading to a global polynomial approximation is not an appropriate choice, and we are not able to use RBF-QR.

Table~\ref{tab:1} shows the
average number of function evaluations needed by MATLAB's \texttt{quadl} to
approximate one integral to a prescribed absolute tolerance for different values of
the shape parameter $\ep{}$ for a node set with $n_1\times n_2=30\times 38$.
The bold faced entries in the table show the largest
tolerance that can be used before the approximation changes significantly. The tolerance is much smaller than the absolute error in the solution, which is about 0.5 compared with the reference solution. The condition numbers computed by MATLAB are between $1\cdot10^{17}$ for $\ep{}=5$ and $1\cdot 10^{11}$ for $\ep{}=12$. For larger $N$, the ill-conditioning also increases, so we expect that even smaller tolerances are needed in this case.   
\begin{table}[!htb]
\centering
\caption{The average number of function evaluations for approximating one integral of the type in 
~(\ref{eq:DtN}) using \texttt{quadl} for multiquadric RBFs. Bold faced numbers show the largest
tolerance that does not significantly alter the result. The relative error against the reference solution is also given. A $\times$ indicates that the approximation had an error of the same order as the size of the solution.}
\begin{tabular}{|c|cc|cc|cc|cc|}\hline
Tolerance & \multicolumn{2}{c|}{1e$-$4} & \multicolumn{2}{c|}{1e$-$6} & \multicolumn{2}{c|}{1e$-$8} & \multicolumn{2}{c|}{1e$-$10}\\\hline
$\ep{}=5$  &33 & $\times$   & 52&$\times$  &  97&$\times$  & \textbf{164}&2.5e$-$1\\
$\ep{}=6$  &34& $\times$  & 54&$\times$  & \textbf{102}&1.4e$-$1  & 168&1.4e$-$1\\
$\ep{}=7$  &35& $\times$  & 56&6.3e$-$1  & \textbf{106}&6.1e$-$2   & 173&5.9e$-$2\\
$\ep{}=8$  &36& $\times$  & 58&6.6e$-$2  & \textbf{109}&5.2e$-$2   & 180&5.2e$-$2\\
$\ep{}=9$  &36& $\times$  & \textbf{59}&6.0e$-$2   & 112&5.0e$-$2   & 187&5.0e$-$2\\
$\ep{}=10$ &36& $\times$   & \textbf{60}&5.0e$-$2  & 114&5.0e$-$2   & 193&5.0e$-$2\\
$\ep{}=11$ &37& 3.5e$-$1  & \textbf{62}&5.1e$-$2  & 115&5.1e$-$2  & 198&5.1e$-$2\\
$\ep{}=12$ &\textbf{37}& 7.9e$-$2   & 63&5.2e$-$2  & 117&5.2e$-$2   & 202&5.2e$-$2\\\hline
\end{tabular}
\label{tab:1}
\end{table}


\subsection{Choosing the starting value for the shape parameter}
To solve a large scale problem efficiently it pays off to choose the shape parameter carefully, since it does not affect the cost, only the accuracy. As was discussed in Section~\ref{sec:epdep}, a practical way to achieve convergence in spite of the ill-conditioning is to choose the shape parameter as $\ep{}=Ch^\beta$, with $\beta>-1$. We are going to use $\beta=-1/2$, which provides a trade-off between convergence rate and conditioning problems. Then we need to decide which $C$ to use.

Compared with the full solution, it is not so expensive to solve a much less resolved problem a few times for different shape parameters. We want to test if the residual-based error estimate~\eqref{eq:errest2} can help us find the best shape parameter value for such a problem, and from there the $C$ to use. We also try the $\ell_2$-norm of the residual as an indicator, since the residual should be small when the error is small. The maximum norm of the residual was also tested, but did not correlate strongly with the error. Figure~\ref{fig:optep} shows the relative error estimate as well as the relative $\ell_2$-norm of the residual together with the actual error against the reference solution. In the first example, the shape parameter values corresponding to the smallest error estimate, $\ep{\mathrm{est}}$, and the smallest residual norm, $\ep{\mathrm{res}}$, are both close to the actual minimum $\ep{*}$. In the second example, the minimum for the error estimate is a bit higher than the true value. 
\begin{figure}[!htb]
\centering
\includegraphics[width=0.47\textwidth]{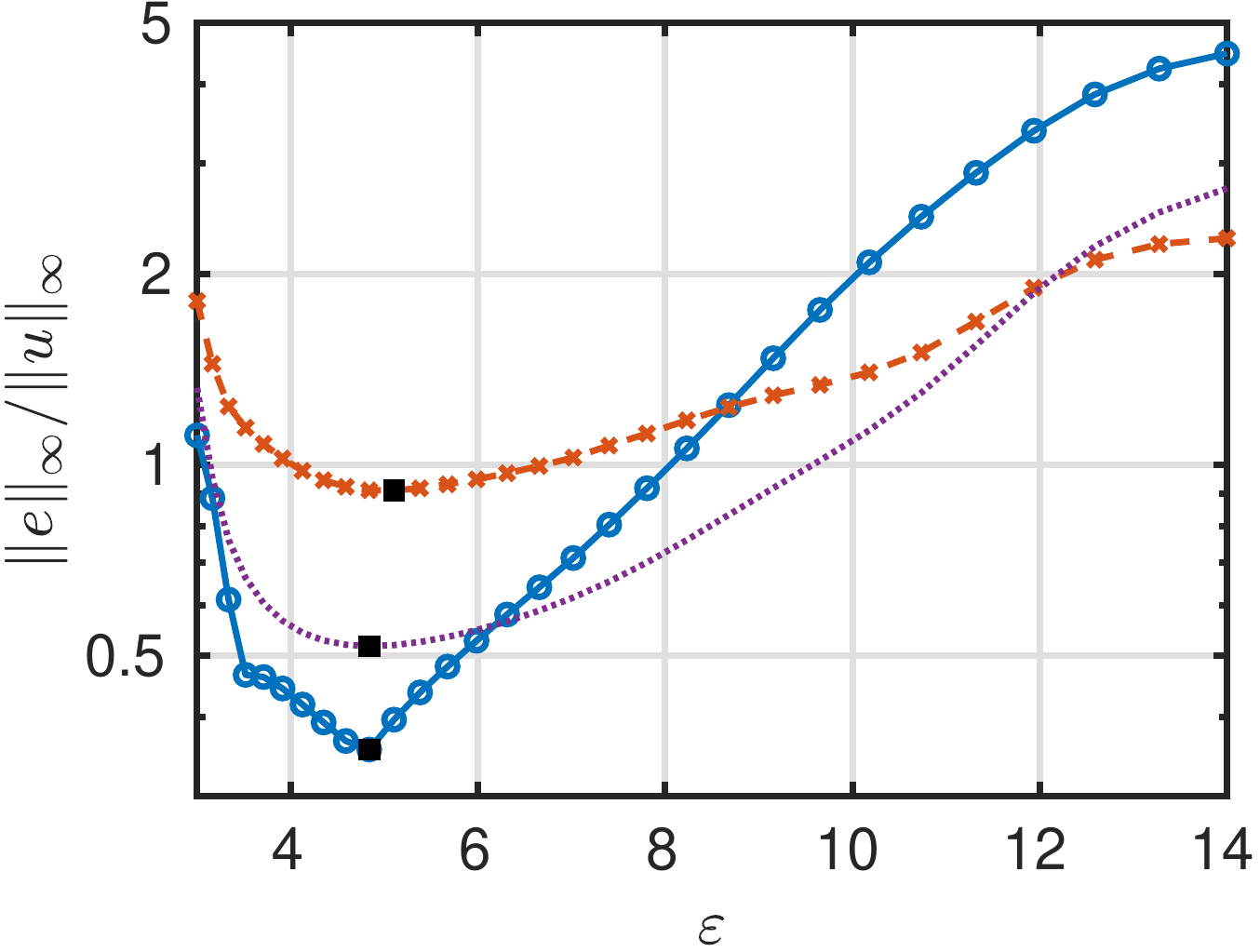}
\raisebox{-0.5mm}{\includegraphics[width=0.5\textwidth]{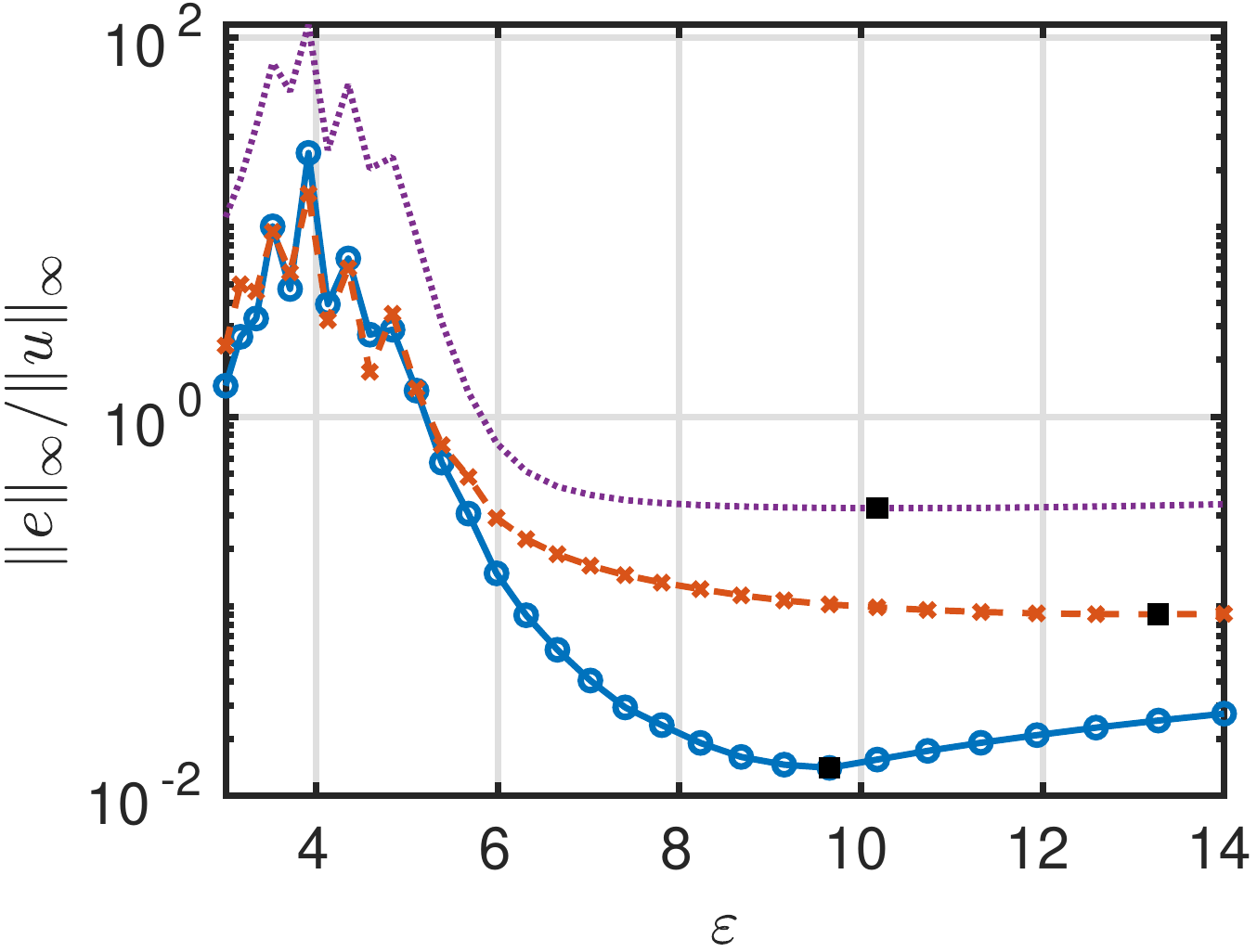}}
\caption{The error estimate~\eqref{eq:errest2} ($\times$), the $\ell_2$-norm of the residual (dotted line) and the error against a highly resolved reference solution ($\circ$) for the $10\times12$ (left) and $40\times50$ (right) node sets.The minima are indicated by black squares.}
\label{fig:optep}
\end{figure}

Table~\ref{tab:epsilon} gives the minimal shape parameter values for ten different (small to medium) problem sizes. In most of the cases the error estimate, the residual estimate, or both are close to the true value. We have also computed the $C$-values corresponding to the average of the two estimates.
\begin{table} 
  \centering
  \caption{The optimal shape parameter for the error against the reference solution $\ep{*}$, the error estimate $\ep{\mathrm{est}}$, and the $\ell_2$-norm of the residual $\ep{\mathrm{res}}$ and the constant $\tilde{C}$ implied by the average of the two estimates for different problem sizes.}
  \label{tab:epsilon}
\begin{tabular}{|l|rrrrr|}\hline
$n_1\times n_2$ & 10 $\times$ 12  & 11 $\times$ 14  & 12 $\times$ 15  & 13 $\times$ 16  & 14 $\times$ 17 \\\hline
$\ep{*}$  &    4.8  &    4.6  &    5.7  &    4.6  &    7.0 \\
$\ep{\mathrm{est}}$   &    5.1  &    6.7  &    8.7  &    6.7  &    3.2 \\
$\ep{\mathrm{res}}$  &    4.8  &    5.4  &    7.4  &    5.4  &    3.7 \\
$\tilde{C}$  &   1.5  &   1.7  &   2.2  &   1.6  &   0.9 \\\hline
$n_1\times n_2$ & 15 $\times$ 19  & 16 $\times$ 20  & 20 $\times$ 25  & 30 $\times$ 37  & 40 $\times$ 50 \\\hline
$\ep{*}$  &    7.4  &    4.8  &    9.7  &   9.2  &   9.7 \\
$\ep{\mathrm{est}}$   &    7.8  &    6.0  &    9.2  &    13.3  &   13.3 \\
$\ep{\mathrm{res}}$  &    6.3  &    5.4  &    7.0  &    9.7  &    10.2 \\
$\tilde{C}$  &   1.7  &   1.3  &   1.7  &   1.9  &   1.7 \\\hline
\end{tabular}  
\end{table}
If we had solved only the first problem, we would have chosen $C=1.5$. This is what we have used for the convergence experiments in the following subsection.  We also tried $C=1$, but then the ill-conditioning prevented us from solving the largest problems.

An alternative method to find a good shape parameter value is to use the leave-one-out cross validation method. It was first introduced for RBF interpolation methods~\cite{Rippa99}, and a cost effective version of the method was derived in~\cite{YaYaLi18}. It was suggested to use LOOCV for PDE problems using the residual as error indicator in~\cite{ChGoKaZa03}, and this was implemented in~\cite{FeRoJoFa07}. We tried to use residual-based LOOCV on the Helmholtz problems in this paper, but the preliminary results were not close enough to the optimal values, and we therefore decided to use the error approximation instead.

\subsection{Convergence experiments}
Here, we use the relation $\ep{}=C/\sqrt{h}=1.5/\sqrt{h}$ to run a convergence experiment. We solve the test problem for different problem sizes and compute the error estimate and the error against the reference solution. According to equation~\eqref{eq:ephscale}, with this choice of shape parameter scaling, the error should be of the form
\[\|e\|_\infty=A_M\exp(-C_M/\sqrt{h}).\]
In Figure~\ref{fig:errest}, we plot the relative error and the relative error estimate~\eqref{eq:errest2} against $1/\sqrt{h}$. A line has been fitted to the data set, and it is clear from the picture that it is a good fit of the convergence trend. The slopes $C_M$ are 0.78 for the error and 0.75 for the error estimate, which means that the error estimate gives very good results for the ratio of errors at different resolutions, even if the constant is not precise. The constant $A_M$ is 3.0 times larger for the error estimate than for the error. Based on the curves in Figure~\ref{fig:optep}, we expect $A_M$ to be problem and/or parameter dependent.

If we compare the error reduction from the smallest to the largest problem size with what we would get with an algebraically converging method where the error is $\mathcal{O}(h^p)$, a reduction in error with a factor 242 for a step size reduction of 10 corresponds to $p=2.4$. That is, even if we have exponential convergence, the overall error reduction is not that impressive. However, the small numbers of points we can use, while still getting reasonable results are impressive. The smallest problem has 12 points in the horizontal direction, which corresponds to 4 points per wavelength. A rule of thumb for a finite difference method is that at least 15 points per wavelength, that is 45 for this problem, are needed for geometric resolution. 
\begin{figure}[!htb]
\centering
\includegraphics[width=0.47\textwidth]{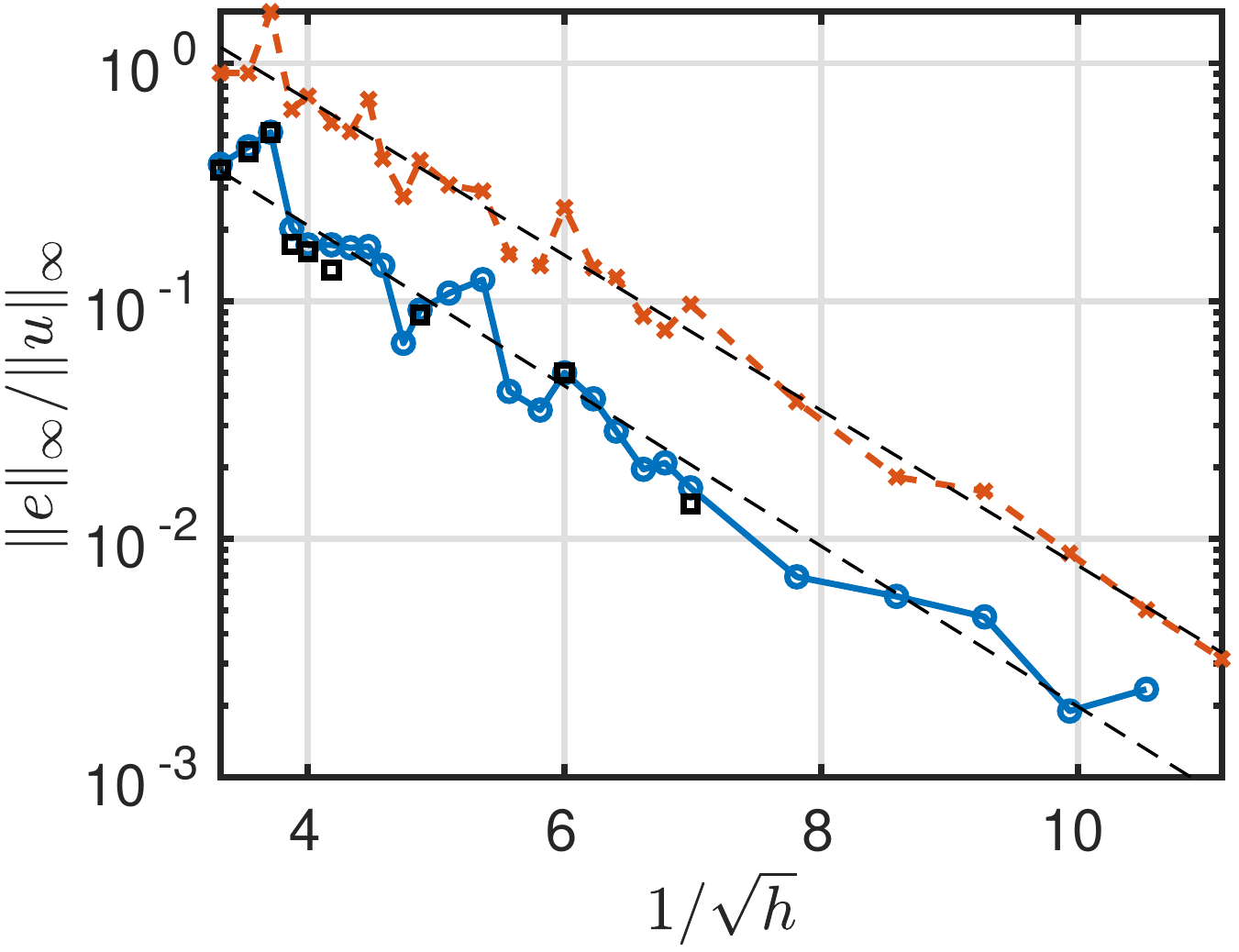}
\includegraphics[width=0.47\textwidth]{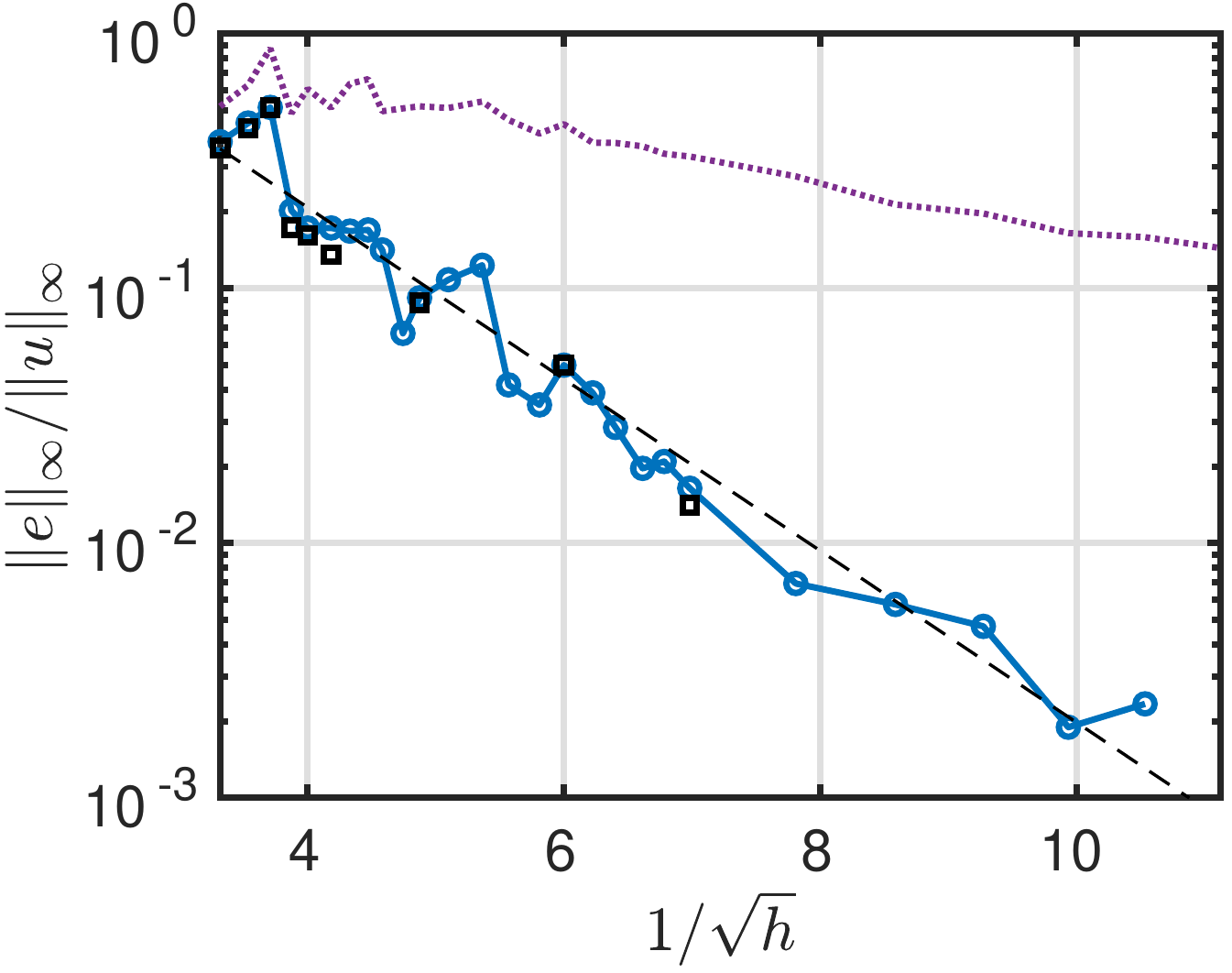}
\caption{The relative error estimate~\eqref{eq:errest2} ($\times$) and the error against the reference solution as a function of $1/\sqrt{h}$ are shown in the left subfigure. The $\ell_2$-norm of the residual is shown together with the same error curve in the right subfigure.
  The black squares are the results for the optimal shape parameter values. The dashed lines represent lines fitted to the data points.}
\label{fig:errest}
\end{figure}
For the largest problems, the tolerance for the quadrature had to be lowered.
The small perturbations introduced by the inexact quadrature with tolerance $1\cdot10^{-10}$ are enough to prevent the convergence curve from following the straight line, and the convergence rate then seems to decrease. These experiments were run using tolerance $1\cdot 10^{-14}$.

In the right subfigure of Figure~\ref{fig:errest}, the $\ell_2$-norm of the residual is plotted together with the same relative error results. Even though the residual norm gives reasonable estimates for the optimal shape parameter, it is clear that we cannot use it to follow the error trend.

\subsection{Experiments with larger wave numbers}
We have also solved problems with larger wavenumbers as this usually is a challenge for wave propagation problems. For these problems $\kappa=12\pi$ and $24\pi$, corresponding to 6 and 12 wavelengths along the duct. The solution functions are shown in Figure~\ref{fig:sols}.
%
%
\begin{figure}[!htb]
\centering
\includegraphics[width=0.47\textwidth]{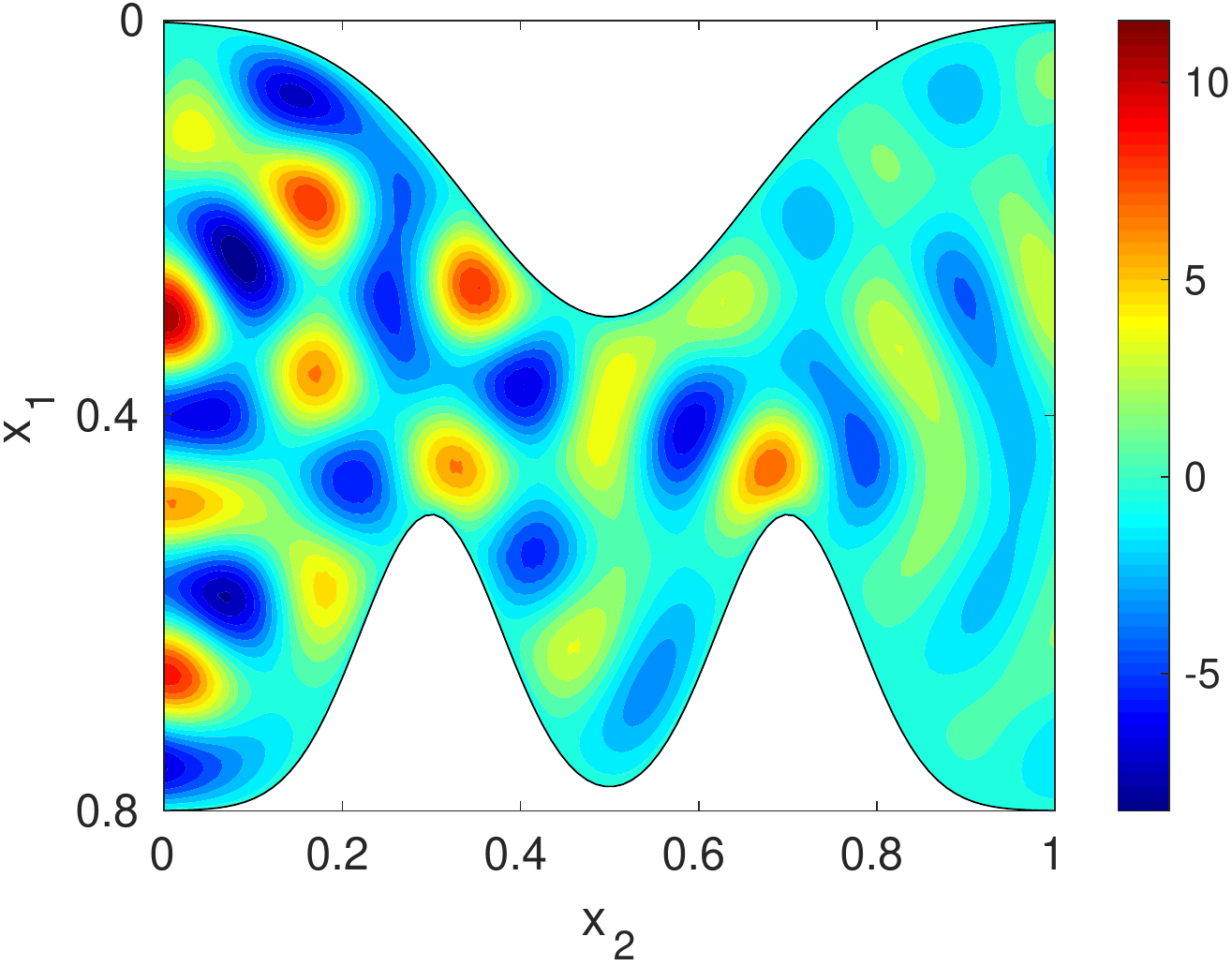}
\raisebox{-0.5mm}{\includegraphics[width=0.47\textwidth]{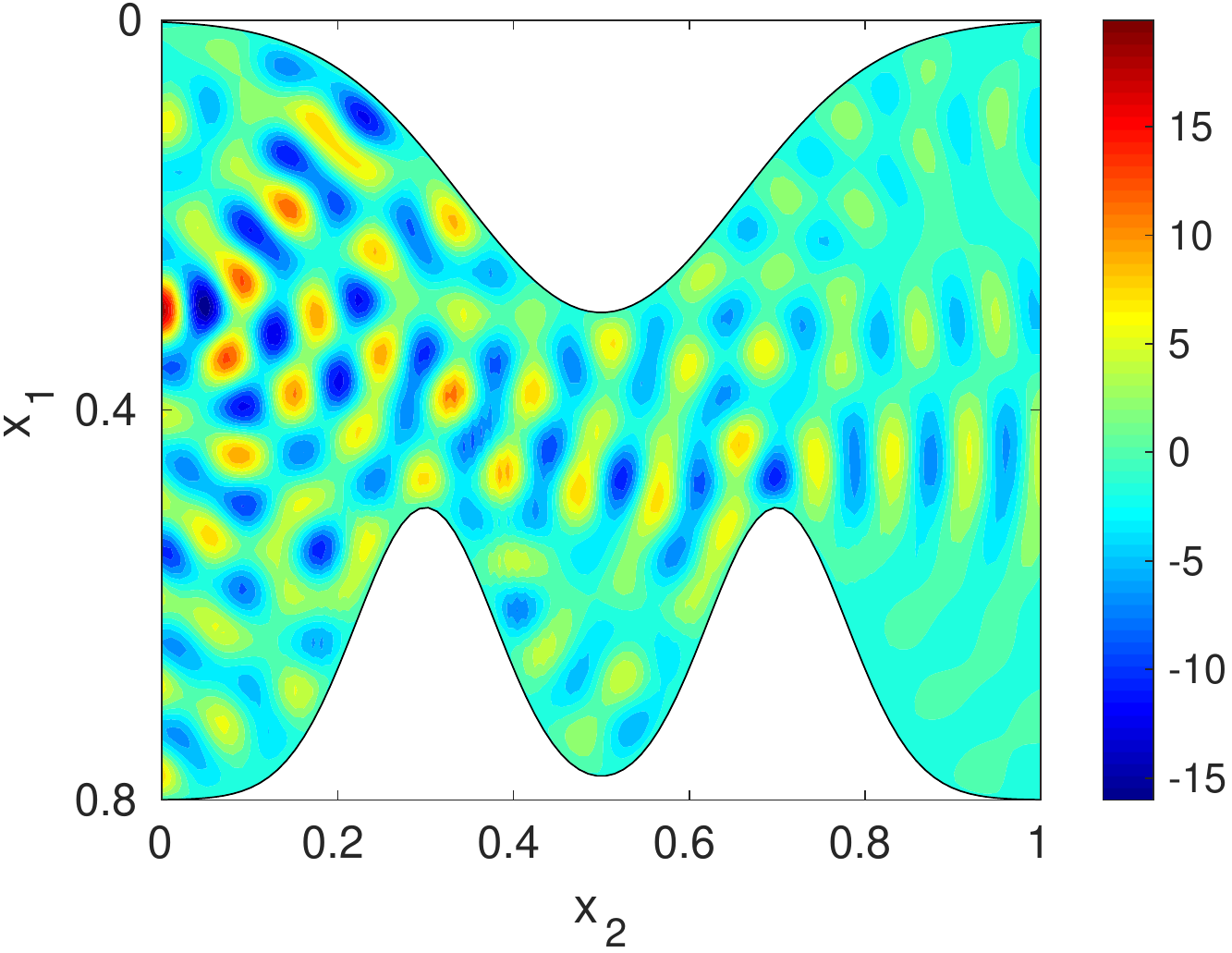}}
\caption{The solution function for $\kappa=12\pi$ (left) and $\kappa=24\pi$ (right). The solution is computed using nodes with $n_1\times n_2=50\times 62$. The source is located at the left boundary at $x_s=0.3$.}
\label{fig:sols}
\end{figure}

These solutions have 9 and 19 propagating modes at the left boundary, respectively. A problem here was to compute the inner products with the eigenmodes to high enough accuracy. The accuracy of the boundary conditions is crucial to get the correct wave pattern. We were not able to run the simulations for $\kappa=24\pi$ for a larger problem size than $50\times62$ (with good results). The same shape parameter scheme as for the convergence experiment was used.

For each problem, we ran three different problem sizes in order to get an estimate of the errors in the solutions. Then we computed the relative errors of the coarser solutions with respect to the finest solution. The computed errors are compared with the error estimate~\eqref{eq:errest2} to find the approximate ratio between real errors and estimate. Then we use the worst case ratio to project an error estimate also for the finest solution. The results are shown in Tables~\ref{tab:W6} and~\ref{tab:W12}, indicating around 1\% error for $\kappa=12\pi$ and around 12\% error for $\kappa=24\pi$. The numbers of points per wavelength are 75/6=12.5 and $62/12\approx5.2$, respectively. For the problem with $\kappa=6\pi$ and the same two node sets, we had 0.6--0.7\% error and 21--25 points per wavelength. If we look at the error for 5 points per wavelength for $\kappa=6\pi$, it is around 20\%. That is, it seems that we do not need to resolve more with increasing frequency. For finite difference and finite element methods the error in a waveguide Helmholtz problem is typically proportional to $h^p\kappa^{p+1}$~\cite{BaGoTu85,OttoLa99,Larsson99}. This effect diminishes as the order of the method increases, and for a spectral method it disappears. This is consistent with the results for small $\ep{}$ in Section~\ref{sec:smallep}, where the error approximations are proportional to $(\kappa h)^K$.  
%
%

%
\begin{table}
  \centering
  \caption{The relative error in relation to the finest solution, the relative error estimate, the ratio, the adjusted error estimate, and the local slope for the problem with $\kappa=12\pi$.}\label{tab:W6}
\begin{tabular}{|c|ccccc|}\hline
  $n_1\times n_2$ & $\|e\|_\infty/\|u\|_\infty$ & $\|\tilde{e}\|_\infty/\|u\|_\infty$  & $\|\tilde{e}\|_\infty/\|e\|_\infty$ & $\frac{\|\tilde{e}\|_\infty}{\|u\|_\infty}/\min\frac{\|\tilde{e}\|_\infty}{\|e\|_\infty}$ & $C_M$\\\hline 
  40 $\times$ 50 & 0.0435 & 0.3083 & 7.1 & 0.0435 & $-$  \\
  50 $\times$ 62 & 0.0240 & 0.1781 & 7.4 & 0.0251 & 0.66 \\
  60 $\times$ 75 & $-$    & 0.0699 &  $-$& 0.0099 & 1.2\\\hline
\end{tabular}  
\end{table}

\begin{table}
  \centering
  \caption{The relative error in relation to the finest solution, the relative error estimate, the ratio, the adjusted error estimate, and the local slope for the problem with $\kappa=24\pi$.}\label{tab:W12}
\begin{tabular}{|c|ccccc|}\hline
  $n_1\times n_2$ & $\|e\|_\infty/\|u\|_\infty$ & $\|\tilde{e}\|_\infty/\|u\|_\infty$  & $\|\tilde{e}\|_\infty/\|e\|_\infty$ & $\frac{\|\tilde{e}\|_\infty}{\|u\|_\infty}/\min\frac{\|\tilde{e}\|_\infty}{\|e\|_\infty}$ & $C_M$\\\hline 
  30 $\times$ 37 & 0.3842 & 1.4909 & 3.9 & 0.3842 & $-$  \\
  40 $\times$ 50 & 0.1292 & 0.7941 & 6.1 & 0.2047 & 0.64 \\
  50 $\times$ 62 & $-$    & 0.4756 &  $-$& 0.1226 & 0.62\\\hline
\end{tabular}
\end{table}

\section{Discussion}\label{sec:end}
%
The main benefits with using global RBF methods for solving Helmholtz-type problems are that very few points per wavelength are needed to obtain a qualitatively correct solution, and that
the number of points per wavelength does not need to increase with $\kappa$ (the number of wavelengths). It is also relevant that non-trivial waveguide geometries can be managed easily, since there is no need for an orthogonal or even a structured grid. In~\cite{OttoLa99,Larsson99}, we used orthogonal grids, which limits how much the boundaries can vary. It should be mentioned that the DtN boundary conditions assume a smooth continuation with horizontal boundaries outside of the domain. In our example, the derivative of the boundary curves is non-zero at $x_2=0,\,1$, which introduces an error. However, since we got optimal convergence rates in the experiments, these errors are not large enough to influence the results at the level of errors that we could reach.  

The main challenge of using a global RBF method for a PDE problem is the computational cost. In Helmholtz applications it is of interest to solve problems that are large in terms of wavelengths, and therefore require a certain resolution. With a dense linear system, both the storage requirements and the computational cost for a direct solver quickly become difficult to manage at least without using distributed computing. On top of that, the severe ill-conditioning of the linear systems makes them sensitive to numerical errors in the quadrature employed in DtN conditions as well as to rounding errors. An attractive alternative to using global RBF collocation methods is to use localized methods such as RBF-generated finite differences (RBF-FD)~\cite{FoFly15} and RBF partition of unity methods (RBF-PUM)~\cite{LaShchHe17}. In~\cite{ShchLa16} it was shown that for an option pricing application, there was no significant difference in accuracy between the global method and  RBF-PUM for a given problem size, while the computational cost is significantly lower for RBF-PUM due to sparsity of the linear systems.

We compared the non-symmetric and symmetric collocation approaches and found that the symmetric method, even though elegant, becomes cumbersome especially for non-trivial operators. The main benefit of the symmetric collocation is the guaranteed non-singularity of the interpolation matrix. However, for the non-symmetric method, singularity only occurred for wavenumbers that were physically uninteresting or for problems that were numerically unresolved. It seems reasonable that if the continuous problem is well-posed and the discrete problem is resolved enough to be close to the continuous problem, singularity is unlikely, see also~\cite{HonSch01,Schaback16}.

We have also investigated the error behavior as a function of $N$ and $\ep{}$ from different perspectives. Some of this can be explained by the limit behavior. We studied this for interpolation in~\cite{LaFo05}, but here we looked at what is different for PDE problems. If the node set is unisolvent and PDE unisolvent, the RBF approximant has the form $s(\underline{x})=P_K(\underline{x})+\ep{2}P_{K+2}(\underline{x})+\ldots$, where $P_K(\underline{x})$ is the unique polynomial solution of degree $K$ to the PDE problem, and $P_{K+2j}$ have zero PDE residual at the node points. When $\ep{}$ is small, $P_K(\underline{x})-u(\underline{x})$ dominates the error. This is the flat region in the error as a function of $\ep{}$, see Figures~\ref{fig:errors} and~\ref{fig:errors2}. Then as $\ep{}$ starts to grow, there may be an optimal $\ep{}$-range where the additional terms improve on the polynomial error, but eventually, the $\ep{}$-terms dominate the error, and the exponential convergence rate depends mainly on $\ep{}$ and not on the problem, see Figure~\ref{fig:consterr}.

A contribution that we think is novel and of practical interest is the discussion about convergence for scaled shape parameters. We provide arguments for why $\ep{}=Ch^\beta$ should lead to a convergence rate of the form $e^{C_M/h^{\beta+1}}$, and show that this is what we also get numerically for $\beta=-1/2$.

Another practical contribution is that we have shown that given a reasonable error estimate, we can decide on a good choice for the shape parameter based on a small test problem. Then using a converging shape parameter strategy, we can solve the real problem, and also based on a comparison of error estimates and errors against the finest solution, we can get an improved error estimate for the solution of the most resolved problem. 

Even though global collocation methods are not really practical for large scale problems, many of the things we have learned can be transferred also to localized methods, as these are based on 'local global collocation'.



\appendix
\section{Proof sketches}
\label{sec:A}
In order to save space and not repeat already published material, we do not 
give the full proof for Theorem~\ref{th:1} here, instead we
give instructions how to carry out the proof using the machinery laid down 
in~\cite[pp.\ 122--127]{LaFo05}. Because the RBF approximant in the PDE case 
has exactly the same form as the usual RBF interpolant, we get the exact same 
expansion~\cite[Eq.~(28)]{LaFo05} of the solution for small $\ep{}$
\begin{equation}
  s(\underline{x},\ep{})=\ep{-2K}(\ep{-2q}P_{-q}(\underline{x})+\cdots+\ep{2K}P_K(\underline{x})+\cdots).
\end{equation}
What differs from the interpolation case is the conditions that the 
polynomials must fulfill. In the PDE case we have that
\begin{equation}
\begin{array}{ll}
P_K & \mbox{satisfies the inhomogeneous PDE and} \\
    & \mbox{boundary conditions at the $N$ node points},\\
P_j,\quad j\neq K & \mbox{satisfy the homogeneous PDE and}\\
    & \mbox{boundary conditions at the $N$ node points.} 
\end{array}
\label{eq:cond}
\end{equation}
The proof of part (i) is completely analogous to the proofs of Theorems~4.1 
and 4.2 in~\cite{LaFo05}. For part (iii), we follow the steps in the proof of
Theorem~4.1. For simplicity, we first assume that the nullspace $n(\underline{x})$ of the
matrix $Q$ defined in~(\ref{eq:Q}) is of degree $K$. 
The steps are identical
until the point were we are considering the conditions for $P_{-q+K}$.  
There are three possibilities
\begin{itemize}
\item If $q=0$, then the polynomial is $P_K$ and must satisfy the PDE. 
      However, since the matrix $Q$ is singular, this
      can only happen in the (unlikely) case that the right hand side 
      $\underline{f}$ is in the range of $Q$.
\item If $q>0$ and $P_{-q+K}$ is identically zero, then the moment 
      vector $\underline{\sigma}_{-q}$ is zero, leading to 
      $\underline{\lambda}_{-q}$, because of the non-singularity of $P$.
      This means that we could have omitted the ${-q}$ term in the expansion
      and we must have $q=0$. This is in conflict with the previous case.
\item Then we must have $q>0$ and $P_{-q+K}$ must contain a nullspace
      component $\alpha n(\underline{x})$. This means that we have at least
      one divergent term in the expansion of the solution.
\end{itemize}
If there is just a single nullspace component of degree $K$, and extending 
$Q$ with an appropriate monomial of degree $K+1$ leads to 
$\mathrm{rank}(Q)=N$, then at the next step looking at $P_{-q+1+K}$ we get 
the two possibilities $\alpha=0$, which has been ruled out, or 
$P_{-q+1+K}=P_K$. Hence, we must have $q=1$ and divergence of order 
$\ep{2}$.  

If the nullspace is of lower degree than $K$, we will also get divergence,
but the negative power of $\ep{}$ could be higher. 

The argument behind part (ii) is that we need to go to the polynomial
$P_{-q+M}$ before we have enough degrees of freedom to satisfy the discrete
PDE problem. Therefore, the limit must have degree $M$. However, because
$Q$ is non-singular, all previous polynomials must be identically zero and
accordingly there can be no divergence. Compare with the proofs of 
Theorems~4.2 and~4.3. 

For part (iv) of the proof, we follow the proof of Theorem~4.3. The important
difference is that the relation between the moments is determined by the 
nullspace of $P$, but the possible nullspace parts in the polynomials 
$P_{-q+J}$ is determined by the nullspace of $Q$. In~\cite{LaFo05}, we arrive
at an equation $C^TB^{-1}C\underline{\alpha}=\underline{0}$. The corresponding
equation here becomes 
\begin{equation}
C^TB^{-1}D\underline{\alpha}=\underline{0},
\label{eq:CBD}
\end{equation}
where $C$ is of size $n\times m$ and $D$ has dimensions $n\times p$. To be 
precise, at step $J$ of the proof, $m$ is the dimension of the $J$-degree 
part of the nullspace of $P$ and $p$ is the corresponding dimension for $Q$.

If $m=p$, the system~(\ref{eq:CBD}) is square, but non-singularity cannot be
guaranteed when $C$ and $D$ are different. If $m>p$, the system is 
over-determined and it is likely that
the only solution is $\underline{\alpha}=0$. If on the other hand, $m<p$ the
system is under-determined, allowing for non-zero nullspace components in
the expansion polynomials.

If $n(x)$ defines a nullspace component for $P$, then 
$p(\underline{x})n(\underline{x})$ defines a higher degree nullspace component
using any polynomial $p(\underline{x})$. Therefore, the dimension $m$ 
typically grows with $J$. However, there is no similar mechanism for the
nullspace of $Q$ (since $Ln(\underline{x})=0$ does not generally imply  
$L(p(\underline{x})n(\underline{x}))=0$). Accordingly, the dimension
$p$ is likely to stay the same or decrease with $J$.

These facts taken together lead to the statements in part (iv). We use the
formulation {\em likely}, since it should be theoretically possible to construct 
counter examples in both the convergent and the divergent case.  


\bibliographystyle{siam}
\bibliography{HZpaper}

\end{document}